\documentclass[10pt]{article}
\usepackage{mathrsfs}
\usepackage{amssymb}
\usepackage{color}
\setlength{\oddsidemargin}{0mm} \setlength{\evensidemargin}{0mm}
\setlength{\topmargin}{-15mm} \setlength{\textheight}{220mm}
\setlength{\textwidth}{155mm}
\usepackage{amsthm}
\usepackage{amsmath}
\usepackage{graphicx}
\usepackage[square, comma, sort&compress, numbers]{natbib}


%

 \newtheorem{thm}{Theorem}[section]

 \newtheorem{prop}[thm]{Proposition}
 \theoremstyle{definition}

 \numberwithin{equation}{section}

\newcommand{\Rmnum}[1]{\expandafter\@slowromancap\romannumeral #1@}
\makeatother
\newtheorem{theorem}{Theorem}[section]
\newtheorem{lemma}[theorem]{Lemma}

\newtheorem{proposition}[theorem]{Proposition}

\theoremstyle{definition}

\theoremstyle{remark}

\newcommand \bel {\be\label}
\newcommand \del \partial

\newcommand \be {\begin{equation}}
\newcommand \ee {\end{equation}}

 \newcommand \RR			{\mathbb R}

\newcommand \underm {\underline m}

\newcommand \bei  {\begin{itemize}}
\newcommand \eei {\end{itemize}}

\newcommand \Et {\widetilde E}


\begin{document}
\title{Boundedness of the total energy of relativistic membranes
 evolving in a curved spacetime}
\author{Philippe G. LeFloch\footnotemark[1] \hskip.15cm and
 Changhua Wei\footnotemark[2]} 

\footnotetext[1]{Laboratoire Jacques-Louis Lions \& Centre National de la Recherche Scientifique, Universit\'e Pierre et Marie Curie (Paris 6), 4 Place Jussieu, 75252 Paris, France. Email: contact@philippelefloch.org}

\footnotetext[2]{Corresponding author: 
Department of Mathematics, Zhejiang Sci-Tech University, Hangzhou 310018, P.R. China.  
Email: changhuawei1986@gmail.com.
\newline
\noindent{Keywords and phrases}: relativistic membrane; curved spacetime; Hyperboloidal Foliation Method; global energy bound.}

\date{September 2016}

\maketitle
\allowdisplaybreaks[4]

\begin{abstract}
We establish a global existence theory for the equation governing the evolution of a relativistic membrane in a (possibly curved) Lorentzian manifold, when the spacetime metric is a perturbation of the Minkowski metric. Relying on the Hyperboloidal Foliation Method introduced by LeFloch and Ma in 2014, we revisit a theorem established earlier by Lindblad (who treated membranes in the flat Minkowski spacetime) and we provide a simpler proof of existence, which is also valid in a curved spacetime and, most importantly, leads to the important property that the total energy of the membrane is globally bounded in time. 
\end{abstract}




\section{Introduction}
\label{Section1}

\subsection*{Objective of this paper}

We are interested in the following $n$-dimensional relativistic membrane equation
\bel{1.1}
D_\mu \Big(\frac{D^\mu\phi}{\sqrt{1+ D_{\nu}\phi D^{\nu}\phi}}\Big) = 0,
\ee
posed in a curved spacetime with Lorentzian metric $g=(g_{\mu\nu})$ and  covariant derivative operator $D_\mu $.
The unknown function $\phi$ defined in Minkowski spacetime $\RR^{n+1}$
parametrizes the timelike hypersurface of interest. We use Einstein convention on repeated indices and
all Greek indices $\mu, \nu, \ldots$ take their values in $\{0,1,\cdots,n\}$.

Relativistic membrane equations are the hyperbolic counterparts to the minimal surfaces/subma\-nifolds, which has a long history originating from the famous Plateau problem and been studied pretty well in the last decades; see for instance Osserman \cite{Osserman}. The theory of relativistic membrane also plays a very important role in elementary particle physics, especially in dealing with (one-dimensional) relativistic strings. For the study of relativistic strings in classical dynamics and via the first quantization method, we refer to the textbook of Barbashov and Nesterenko \cite{B-N}. The theory of relativistic membranes attempts to develop a quantum theory of higher-dimensional, relativistic, extended subjects, and describes the dynamics of many physical systems under appropriate phenomenological assumption, such as Nambu action. 

In the present paper, we study solutions to the initial value problem associated with \eqref{1.1} which was first solved by Lindblad \cite{Lindblad} at least for membranes in Minkowski spacetime. Our main objective is to improve the result therein and to establish that the natural energy of the membrane is {\sl globally bounded in time}, while \cite{Lindblad} could only obtain a bound for lower-order derivatives of the solution. Our proof uses LeFloch and Ma's Hyperboloidal Foliation Method \cite{PLF-YM-book}--\cite{PLF-YM-2}, which allows one to treat coupled systems of wave and Klein-Gordon equations. This method was built upon earlier work by Klainerman \cite{Klainerman} and Hormander \cite{Hormander} on the quasilinear Klein-Gordon equation. The use of the hyperboloidal foliation of Minkowski spacetime to study coupled systems of wave and Klein-Gordon equation was investigated in \cite{PLF-YM-book} and several classes of such systems were then treated by this method, including the Einstein equations of general relativity  \cite{PLF-YM-book,PLF-YM-1,PLF-YM-2}. Estimates in the hyperboloidal foliation are often more precise and it was observed in  \cite{PLF-YM-book} that, for certain classes of equations, the energy of the solution is bounded globally in time. See LeFloch-Ma's theorem in \cite[Chap.~6]{PLF-YM-book}. 
Our aim in the present paper is to extend this observation to the membrane equation \eqref{1.1}. Our proof will, in addition, rely on a ``double null'' property of the membrane equation which was first observed by Lindblad \cite{Lindblad}.

\subsection*{Statement of the theorem}

By the property of finite speed of propagation of the relativistic membranes, we at first specify the domain of the nonvanishing functions, which is the interior of the future light cone from the point $(1,0,0)$
$$
K:=\{(t,x)\,|\,r<t-1\},
$$
and the following domain limited by two hyperboloids (with $s_{0}<s_{1}$)
$$
K_{[s_{0},s_{1}]}:=\{(t,x)\,|\,s^{2}_{0}\leq t^{2}-r^2\leq s_{1}^2;\,r<t-1\}.
$$
In addition, in this paper we are also interested in covering a broad class of metrics
and, in coordinates $(t,x) = (x^0,x^1,\cdots,x^n)$ with $g=g_{\mu\nu} dx^\mu dx^{\nu}$
and choosing the signature $(-,+,\cdots,+)$, we assume that the metric is a perturbation of the Minkowski metric
represented by $diag(-1,1,\cdots,1)$, more precisely:
\be
\aligned
|g^{\mu\nu}-m^{\mu\nu}| &\leq \delta\Big(\frac{s}{t}\Big)^2s^{-1-\gamma},
\qquad
|\del^{I} g_{\mu\nu}| \leq  \delta\Big(\frac{s}{t}\Big)^2s^{-|I|-\gamma} 
\endaligned
\ee
for some small constants $\delta, \gamma>0$. These decay conditions are compatible with the properties established in for solutions to the Einstein equations.
 $\del^{I}=\del_{0}^{I_{0}}\del_{1}^{I_{1}}\cdots\del_{n}^{I_{n}}$ with $I=(I_{0},I_{1},\cdots,I_{n})$ and $|I|=\sum_{i=1}^{n}|I_{i}|$, $s=\sqrt{t^2-r^2}=\sqrt{t^{2}-|x|^2}$.

Expanding the equation \eqref{1.1}, we find 
\bel{1.5}
(1+  \phi^{\nu}\phi_{\nu})\Box_g \phi-D^\mu\phi D_{\nu}\phi D_\mu D^{\nu}\phi=0,
\ee
where
$$
\Box_g =D_\mu D^\mu=g^{\mu\nu}\del^{2}_{\mu\nu}-\Gamma^\mu\del_\mu =\frac{1}{\sqrt{|det g|}}\del_\mu
\Big(\sqrt{|det g|}g^{\mu\nu}\del_{\nu}\Big),
$$
and
$$
\Gamma^{\gamma}_{\alpha\beta}=\frac{1}{2}g^{\gamma\delta}(\del_\alpha g_{\delta\beta}
+\del_\beta g_{\alpha\delta}-\del_{\delta}g_{\alpha\beta}),\qquad \Gamma^\mu=g^{\alpha\beta}\Gamma_{\alpha\beta}^\mu.
$$
Expanding \eqref{1.5}, we have
\begin{eqnarray}\label{1.6}
&&g^{\mu\nu}\del^{2}_{\mu\nu}\phi-\Gamma^\mu\del_\mu \phi
+\Big(g^{\mu\nu}g^{\alpha\beta}\del_\alpha \phi\del_\beta \phi\del^{2}_{\mu\nu}\phi
-g^{\alpha\beta}\Gamma^\mu\del_\alpha \phi\del_\beta \phi\del_\mu \phi\nonumber\\
&-&g^{\mu\alpha}g^{\nu\beta}\del_\alpha \phi\del_\beta \phi\del^{2}_{\mu\nu}\phi
+g^{\mu\alpha}g^{\nu\beta}\Gamma^{\gamma}_{\mu\nu}\del_\alpha \phi\del_\beta \phi\del_{\gamma}\phi\Big) = 0.
\end{eqnarray}

For instance, when the background spacetime is exactly Minkowski spacetime, then $\Gamma^\mu=\Gamma^{\gamma}_{\mu\nu}=0$, and the nonlinear terms take the following form
$$
(-(\del_{t}\phi)^{2}+|\nabla\phi|^{2})(-\del_{t}^{2}\phi+\Delta\phi)
-(\del_{t}\phi)^{2}\del^{2}_{t}\phi+2\del_{t}\phi
\nabla\phi\cdot\del_{t}\nabla\phi
-\sum_{i,j=1}^{n}\del_{i}\phi\del_{j}\phi\del^{2}_{ij}\phi.
$$
Clearly, the above expression satisfies the classical null condition (introduced in \cite{Klainerman}). Furthermore, the nonlinearities are cubic and, based on these facts, Lindblad  \cite{Lindblad} relied on Klainerman's classical vector field method and established the global stability of the trivial solution (for any $n\geq2$).

We prescribe an initial data for \eqref{1.6} on the spacelike surface $H_{s} $ with constant $s$-slice, i.e. 
\bel{1.7}
 \phi(t,x)|_{H_{s_{0}}}=\epsilon f(x),\qquad \del_{t}\phi(t,x)|_{H_{s_{0}}}=\epsilon g(x),
\ee
in which $s_{0}=\sqrt{t^{2}-r^{2}}=B+1>1$, 
and 
$f=f(x)$ and $g=g(x)$ belong to $C_{0}^\infty(\mathbb R^{n})$ --the space of smooth functions with compact support. 
Before we can state our main result, we need to introduce some notation.

As stated before, Greek indices range from 0 to n and Latin indices range from 1 to n. We denote by $Z$ the family of ``admissible vector fields'' consisting by definition of all vectors
$$
Z_\alpha :=\del_\alpha , \qquad Z_{2+a}:=L_{a}=x^{a}\del_{t}+t\del_{a}.
$$
We also use the semi-hyperboloidal frame defined by
\bel{1.8}
\underline{\del}_{0}:=\del_{t},\quad \underline{\del}_{a}:=t^{-1}L_{a}=\frac{x^{a}}{t}\del_{t}+\del_{a}.
\ee
which satisfies the following commutation relations:
\bel{C}
[\underline{\del}_{t},\underline{\del}_{a}]=-\frac{x^{a}}{t^2}\underline{\del}_{t},
\quad [\underline{\del}_{a},\underline{\del}_{b}]=0.
\ee
Our energy functionals are defined by
\bel{1.9}
E_{0}(s,\phi) = \int_{H_{s}}\Big(\sum_{a}(\underline{\del}_{a}\phi)^{2}+((s/t)\del_{t}\phi)^{2}\Big) \, dx,
\ee
\bel{1.10}
E_{m}(s,\phi):=\sum_{|I|\leq m}E_{0}(s,Z^{I}\phi).
\ee
We are now in a position to state the main result of this paper.

\begin{theorem}[Boundedness of the energy for relativistic membranes]
\label{Theorem1.4}
There exists a sufficiently small parameter $\epsilon_{0}>0$ such that the Cauchy problem \eqref{1.1} and \eqref{1.7} admits a unique global in time solution provided $\epsilon \in (0, \epsilon_{0})$. Furthermore, provided $m\geq6$, one has the uniform energy bound
\be
E_{m}(s,\phi)\lesssim \epsilon^2.
\ee
\end{theorem}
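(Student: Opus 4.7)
The plan is a bootstrap argument on the hyperboloidal foliation $\{H_s\}_{s\geq s_0}$. I assume that on a maximal interval $[s_0,s_\ast)$ the weighted energy satisfies $E_m(s,\phi)\leq C_1\epsilon^2$ for a sufficiently large constant $C_1$, and I aim to show that in fact $E_m(s,\phi)\leq (C_1/2)\epsilon^2$ on the same interval. By the local Cauchy theory for \eqref{1.6} together with the continuity of $s\mapsto E_m(s,\phi)$, this improved bound forces $s_\ast=+\infty$ and simultaneously produces the claimed global estimate $E_m(s,\phi)\lesssim\epsilon^2$.

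To obtain an equation for $Z^I\phi$ with $|I|\leq m$, I commute the expanded equation \eqref{1.6} with the admissible vector fields in the family $Z$. Since the translations $\partial_\alpha$ and the hyperbolic rotations $L_a$ preserve the class of null forms, the resulting source is a sum of (a) cubic null-form products of $\partial Z^J\phi$ with $|J|\leq|I|$, reflecting Lindblad's double null structure of \eqref{1.1}, and (b) contributions from the metric perturbation $g-m$ controlled by the assumed decay $|\partial^I g|\leq\delta(s/t)^2 s^{-|I|-\gamma}$. Applying the standard hyperboloidal energy identity of LeFloch--Ma \cite{PLF-YM-book} to this commuted equation, I obtain
\begin{equation*}
E_0(s,Z^I\phi)^{1/2}\lesssim E_0(s_0,Z^I\phi)^{1/2}+\int_{s_0}^{s}\bigl\|\text{source}(\tau)\bigr\|_{L^2(H_\tau)}\,d\tau.
\end{equation*}

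To control the right-hand side I pass to the semi-hyperboloidal frame \eqref{1.8}, in which every partial derivative $\partial_\mu$ decomposes as a good component $\underline{\partial}_a$, directly dominated by $E_0^{1/2}$, plus a bad part $(x^a/t)\partial_t$ that is dominated only after paying the weight $s/t$. The double null structure guarantees that the coefficient multiplying the worst second-order term $\partial_t^2\phi$ in \eqref{1.6} acquires an extra factor $(s/t)^2$ when expressed in \eqref{1.8}, so every remaining top derivative can be rewritten through $\underline{\partial}_a$ or reabsorbed by the weight of $E_0$. Combined with the Klainerman--Sobolev inequality on $H_\tau$, which yields pointwise bounds of the form $|Z^J\phi|\lesssim\epsilon\,t^{-n/2}(s/t)$ and $|\underline{\partial}_a Z^J\phi|\lesssim\epsilon\,t^{-n/2-1}(s/t)$ for $|J|\lesssim m-n/2-1$, and with a Hardy-type inequality on hyperboloids for undifferentiated $\phi$, this produces an estimate of the form
\begin{equation*}
\bigl\|\text{source}(\tau)\bigr\|_{L^2(H_\tau)}\lesssim \epsilon^2\,\tau^{-1-\gamma'}\,E_m(\tau,\phi)^{1/2}
\end{equation*}
for some $\gamma'>0$ extracted from the cubic null gain and the metric decay exponent $\gamma$. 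A Gronwall argument then closes the bootstrap and delivers the uniform $\epsilon^2$-bound.

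The main difficulty, and the point at which this improves on Lindblad's Minkowski result \cite{Lindblad}, lies in the top-order energy at $|I|=m$: at this level no slowly growing factor $s^{\delta}$ can be tolerated, so every source term must be either reabsorbed into $E_m$ or truly integrable in $\tau$. This is where Lindblad's double null property (which cancels the leading $\partial_t^2\phi$ contribution and supplies the decisive $(s/t)^2$ factor), the cubic rather than quadratic character of the nonlinearity (providing an extra small factor $\epsilon$ after pointwise estimates), and the integrable decay $s^{-1-\gamma}$ of $\partial(g-m)$ for the curved background must all be exploited together; losing any one of these three gains would only yield the slowly growing bound already obtained in \cite{Lindblad}, not the uniform estimate claimed in Theorem~\ref{Theorem1.4}.
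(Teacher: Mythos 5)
Your proposal follows essentially the same route as the paper: a bootstrap on the hyperboloidal foliation, commutation with the admissible fields, the LeFloch--Ma energy identity (with the quasilinear modification $\Et_m$), the semi-hyperboloidal decomposition exploiting Lindblad's double null structure to gain the $(s/t)^2$ factor on $\partial_t^2\phi$, Klainerman--Sobolev decay on hyperboloids, and a Gronwall closure using the integrable $s^{-1-\gamma}$ decay of the metric perturbation. The only cosmetic differences are that the paper's source bound carries a $\delta s^{-1-\gamma}$ term not multiplied by $\epsilon^2$ (coming from the linear variable-coefficient terms) and that no Hardy inequality is needed since $\phi$ never appears undifferentiated; neither affects the argument.
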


The following observations are in order: 

\bei

\item Theorem \ref{Theorem1.4} establish the energy \eqref{1.10} remains uniformly bounded. The Hyperboloidal Foliation Method therefore leads us to a better result, in comparison with the classical vector field method, which leads to the possibility that the highest order energy would be slowly increasing with time.

\item Since
 for the linear and nonlinear wave equations, we know tha
 the decay rate of the $L^\infty$-norm of the solutions depends on the spatial dimension and is slower in diension $n=2$, we will present the details of the proof below in the case $n=2$, while it is straighforward to check that all of our arguments below hold true fin any dimension $n\geq3$.

\eei

\subsection*{Background material and outline of this paper}

We present some further background on the study of relativistic membrane equations.
The relativistic membrane equation can be derived from two aspects. On one hand, geometrically, it is used to characterize the timelike extremal surface embedded in a bulk. On the other hand, by the potential theory, it is used to describe the motion of a perfect fluid named relativistic Chaplygin gas, when this fluid is spacetime irrotational and isentropic. For the details, one can refer to \cite{Hoppe4,Matt}. Due to its importance in geometry and physics, the study of the relativistic membrane has attracted a lot of attention in the last decade. When the background manifold is the Minkowski spacetime, Lindblad \cite{Lindblad} proved the global existence of smooth solutions of relativistic membrane equation when the initial data is sufficiently small and has compact support for $n\geq 2$ by the classical vector field method, however, the highest order energy he got is polynomially increasing with respect to time. Later on, utilizing the similar idea of Lindblad \cite{Lindblad}, Allen, Andersson and Isenberg \cite{Allen} generalized Lindblad's result to timelike extremal submanifold with codimension larger than 1. By the method of geometric analysis, Brendle \cite{Brendle} showed the global existence of smooth solution to the relativistic membrane equation under the perturbation of some timelike hyperplane. His method reduced the estimates on the orders of the derivatives, while it only works for the spatial dimension $n> 3$. By investigating the algebraic and geometric properties of relativistic membrane, Bordeman and Hoppe \cite{Hoppe2,Hoppe3} obtained the classical solution in the light cone gauge. Moreover, they find the relationship between relativistic membranes and two dimensional fluid dynamics by performing variables transformations . Based on this fact, Kong, Liu and Wang \cite{K-L-W} proved the global stability of two dimensional isentropic Chaplygin gases without vorticity by the potential theory. Lei and Wei in \cite{Lei-Wei} investigated the relationship between relativistic membrane equation and relativistic Chaplygin gases and with the help of the ``null structure'' of relativistic membrane equation in exterior region, they showed the global radial solutions of 3D nonisentropic relativistic Chaplygin gases.

For the study of relativistic membrane in general curved background manifold, only a few results are available in the literature. He and Kong in \cite{He-Kong} studied the properties of relativistic membrane equations in Schwarzschild spacetime, furthermore, they showed the global existence and blowup of smooth radial solutions according to different assumptions on the initial data. Kong and Wei in \cite{Kong-Wei} investigated the lifespan of smooth solution to the relativistic membrane equation embedded in de Sitter spacetime when the initial data is sufficiently small and has compact support. Luo and Wei \cite{LW} proved the global existence of smooth solutions to 2D isentropic Chaplygin gases without vorticity in curved space when the initial data is small and has compact support. Recently, in order to investigate the influence of the background metric to the stability of the large solution of relativistic membrane equations, Wei \cite{Wei} studied a class of time-dependent Lorentzian metric and showed that when the decay rate of the derivative of the given metric is larger than $3/2$, then a class of time-dependent large solution to the relativistic membrane is globally stable, while, therein, the highest-order energy is still growing with respect to time. 

So, as far as membrane equations are concerned, Theorem~\ref{Theorem1.4} is the first result that provides a uniform energy bound at all orders and also provides the first generalization to LeFloch-Ma's theorem in \cite[Chap.~6]{PLF-YM-book}. 
Our strategy of proof can be described as follows: At first, for the linear terms, since we do not know the sign of $\Gamma^\mu\del_\mu \phi$ and $(g^{\mu\nu}-m^{\mu\nu})\del^2_{\mu\nu}\phi$, we treat these terms as nonlinear terms and get the decay rate from the perturbation of the background metric, which is the most important reason for our constraint to the metric. Then, for the nonlinear terms, we split them into two parts, the one satisfies the classical null condition, the other one has more decay rate due to the perturbation of the metric.

We arrange our paper as follows: In Section \ref{Section2}, we give some necessary preliminaries on the semi-hyperboloidal frame and the energy estimate for the hyperboloidal foliation. In Section \ref{Section3}, we analyze the structure of the nonlinear terms in semi-hyperboloidal frame and give some important estimates for all the linear and nonlinear terms. Based on the Sobolev inequality and the definition of the energy, we estimate the $L^\infty$-norms of the derivatives of the unknowns in Section \ref{Section4} and also calculate the decay rate of the unknowns under the assumption that the highest order energy is uniformly bounded. In Section \ref{Section6}, we close the above a priori assumption by the continuity method.


\section{Preliminaries on the hyperboloidal foliation}
\label{Section2}

\subsection*{Notations}

We begin with some material from LeFloch and Ma's monograph \cite{PLF-YM-book}. The transition matrices between the semi-hyperboloidal frame \eqref{1.8} and the usual frame $\del_\alpha $, are given by $\underline{\del}_\alpha =\Phi_\alpha ^{\beta}\del_\beta $ and $\del_\alpha =\Psi^{\beta}_\alpha \underline{\del}_\beta $, where
\bel{2.1}
\Phi:=\left(
\begin{array}{ccc}
1&0&0\\
x^{1}/t&1&0\\
x^{2}/t&0&1
\end{array}\right),
\qquad
\Psi:=\Phi^{-1}=\left(
\begin{array}{ccc}
1&0&0\\
-x^{1}/t&1&0\\
-x^{2}/t&0&1
\end{array}\right)
\ee
are homogeneous 0-th order matrices. These frames allow us to decompose an arbitrary tensor $T$,
and to set
$$
T=T^{\alpha\beta}\del_\alpha \otimes\del_\beta =T^{\alpha\beta}\Psi_\alpha ^{\alpha^{'}}
\Psi_\beta ^{\beta^{'}}\underline{\del}_{\alpha^{'}}\otimes\underline{\del}_{\beta^{'}}
:=\underline{T}^{\alpha^{'}\beta^{'}}\underline{\del}_{\alpha^{'}}\otimes\underline{\del}_{\beta^{'}},
$$
so $\underline{T}^{\alpha^{'}\beta^{'}}=T^{\alpha\beta}\Psi_\alpha ^{\alpha^{'}}\Psi_\beta ^{\beta^{'}}$. For instance, the Minkowski metric $m=\underm^{\alpha\beta}\underline{\del}_\alpha \otimes\underline{\del}_\beta$
can be expressed in the semi-hyperboloidal frame as
with
\be
(\underm^{\alpha\beta}) = \left(\begin{array}{ccc}
s^{2}/t^2&x^1/t&x^2/t\\
x^1/t&-1&0\\
x^2/t&0&-1
\end{array}
\right),
\qquad
(\underm_{\alpha\beta}) = \left(\begin{array}{ccc}
1&x^1/t&x^2/t\\
x^1/t&(x^1/t)^2-1&x^{1}x^{2}/t^{2}\\
x^2/t&x^{1}x^{2}/t^{2}&(x^{2}/t)^{2}-1
\end{array}
\right).
\ee
Similarly, a second-order differential operator $T^{\alpha\beta}\del_\alpha \del_\beta $ can also be written in the semi-hyperboloidal frame, we have the following decomposition formula
\bel{2.4}
T=\underline{T}^{\alpha\beta}\underline{\del}_\alpha \underline{\del}_\beta +T^{\alpha\beta}(\del_\alpha \Psi^{\beta^{'}}_\beta )
\underline{\del}_{\beta^{'}}.
\ee
Then for the classical wave operator $\Box$, we have
\begin{eqnarray}\label{2.5}
(s/t)^{2}\underline{\del}_{0}\underline{\del}_{0}=\Box&-&\underm^{0a}\underline{\del}_{0}\underline{\del}_{a}-
\underm^{a0}\underline{\del}_{a}\underline{\del}_{0}-\underm^{ab}\underline{\del}_{a}\underline{\del}_{b}\nonumber
-m^{\alpha\beta}(\del_\alpha \Psi^{\beta^{'}}_\beta )\underline{\del}_{\beta^{'}}.
\end{eqnarray}


\subsection*{Energy estimate for the hyperboloidal foliation}\label{Section2.2}

We establish the energy estimate for the quasilinear wave equation
\bel{2.6}
\Box\phi+G^{\alpha\beta}\del_\alpha \del_\beta \phi=F, 
\ee
where
$$
G^{\alpha\beta}=(g^{\alpha\beta}-m^{\alpha\beta})+g^{\mu\nu}g^{\alpha\beta}
\del_\mu \phi\del_{\nu}\phi
-g^{\alpha\mu}g^{\beta\nu}\del_\mu \phi\del_{\nu}\phi
$$
and
$$
F=\Gamma^\mu\del_\mu \phi+g^{\alpha\beta}\Gamma^\mu\del_\alpha \phi
\del_\beta \phi\del_\mu \phi
-g^{\mu\alpha}g^{\nu\beta}\Gamma_{\mu\nu}^{\gamma}\del_\alpha \phi
\del_\beta \phi\del_{\gamma}\phi.
$$
We set 
\bel{2.7}
\Et_{0}(s,\phi):=E_{0}(s,\phi)+2\int_{H_{s}}(G^{\alpha\beta}\del_{t}\phi\del_\beta \phi)\cdot(1,-x/t) \, dx
-\int_{H_{s}}(G^{\alpha\beta}\del_\alpha \phi\del_\beta \phi) \, dx,
\ee
where $E_{0}(s,\phi)$ is defined by \eqref{1.9}.
Multiplying both sides of \eqref{2.6} by $\del_{t}\phi$ and integrating by parts in $K_{[s_{0},s_{1}]}$, we obtain the energy identity
\bel{2.8}
\frac{d}{ds}\Et_{0}(s,\phi) = 2\int_{H_{s}}\left((s/t)\del_\alpha G^{\alpha\beta}\del_{t}\phi\del_\beta \phi
-(s/2t)\del_{t}G^{\alpha\beta}\del_\alpha \phi\del_\beta \phi+(s/t)\del_{t}\phi F\right) \, dx.
\ee

Similarly, for the higher order energy, we set 
\begin{eqnarray}\label{2.9}
\Et_{m}(s,\phi) = E_{m}(s,\phi)&+&\sum_{|I|\leq m}\Big(
2\int_{H_{s}}(G^{\alpha\beta}\del_{t}Z^{I}\phi\del_\beta Z^{I}\phi)\cdot(1,-x/t) \, dx\nonumber\\
&-&\int_{H_{s}}(G^{\alpha\beta}\del_\alpha Z^{I}\phi\del_\beta Z^{I}\phi) \, dx\Big).
\end{eqnarray}
Using $[\Box,Z^{I}]=0$, for $Z=(\del_\alpha ,L_{a})$, we have
\bel{2.10}
\Box Z^{I}\phi+G^{\alpha\beta}(\del\phi)\del_{\alpha\beta}^{2}Z^{I}\phi=Z^{I}F
-[Z^{I},G^{\alpha\beta}(\del\phi)\del_{\alpha\beta}^{2}]\phi:=\tilde{F}^{I}
\ee
and this leads us to the energy identity
\begin{eqnarray}\label{2.11}
\frac{d}{ds}\Et_{m}(s,\phi)&=&2\sum_{|I|\leq m}\int_{H_{s}}\Big((s/t)\del_\alpha G^{\alpha\beta}\del_{t}Z^{I}\phi\del_\beta Z^{I}\phi\nonumber\\
&& \hskip2.cm 
-(s/2t)\del_{t}G^{\alpha\beta}\del_\alpha Z^{I}\phi\del_\beta Z^{I}\phi+(s/t)\del_{t}Z^{I}\phi \tilde{F}^{I}\Big) \, dx.
\end{eqnarray}


\subsection*{Sobolev inequality on hyperboloids}\label{Section2.3}

The following Sobolev inequality is a natural generalization of a statement in \cite{Klainerman85,Hormander,PLF-YM-book}.

\begin{prop}\label{proposition2.1}
If $\phi$ be a sufficiently regular function defined in the cone $K=\{|x|<t-1\}$, then for all $s>0$ and with $t=\sqrt{s^{2}+|x|^{2}}$ one has 
\bel{2.12}
\sup_{H_{s}}t^{n/2}|\phi(t,x)|
\lesssim
\sum_{a}\sum_{|I|\leq\lceil n/2 \rceil}\|L_a^{I}\phi\|_{L^{2}(H_{s})},
\ee
where $\lceil y \rceil$ is the ceiling function and the summation is over all vector fields $L_{a}$.
\end{prop}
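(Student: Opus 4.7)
The plan is to reduce the estimate on $H_s$ to the standard Sobolev embedding on a Euclidean ball, by pulling back $\phi$ to a function on $\mathbb{R}^n$ and exploiting the identity $L_a\phi = t\,\underline{\partial}_a\phi$. Concretely, I would set $\psi(x) := \phi(\sqrt{s^2+|x|^2}, x)$ for $x \in \mathbb{R}^n$, so that $H_s$ is parametrized by $x$ alone and the defining measure of the $L^2(H_s)$ norm is simply $dx$. A direct chain-rule computation gives
$$
\partial_{x^a}\psi(x) = \underline{\partial}_a\phi(t,x) = t^{-1}L_a\phi(t,x),
$$
and by induction on $|I|$ one checks that
$$
\partial_x^I \psi \;=\; \sum_{|J|\leq |I|} \frac{c^I_J(x)}{t^{|I|}}\,L^J\phi,
$$
where the coefficients $c^I_J$ are smooth functions that remain uniformly bounded inside the cone $K$, since $|x^a|/t<1$ throughout $K$.

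Next, I would fix an arbitrary point $(t_0, x_0)\in H_s$ and apply the standard Sobolev inequality on the Euclidean ball $B := B(x_0, t_0/2) \subset \mathbb{R}^n$. A direct comparison on the hyperboloid shows that $t \sim t_0$ uniformly on $B$. Rescaling by $y = (x-x_0)/t_0$ to the unit ball and invoking the Sobolev embedding at order $\lceil n/2\rceil$ yields
$$
|\psi(x_0)|^2 \;\lesssim\; \sum_{|I|\leq \lceil n/2\rceil} t_0^{\,2|I|-n}\int_B |\partial_x^I\psi|^2\,dx.
$$
Substituting the expansion of $\partial_x^I\psi$ in terms of $L^J\phi/t^{|I|}$ and using $t\sim t_0$ on $B$ makes the powers of $t_0$ cancel exactly, yielding
$$
t_0^n\,|\phi(t_0,x_0)|^2 \;\lesssim\; \sum_{a}\sum_{|J|\leq\lceil n/2\rceil}\int_B |L_a^J\phi|^2\,dx \;\leq\; \sum_{a}\sum_{|J|\leq\lceil n/2\rceil}\|L_a^J\phi\|_{L^2(H_s)}^2,
$$
which is the claimed inequality after taking square roots and the supremum over $(t_0,x_0)\in H_s$.

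The main technical task is the chain-rule expansion of $\partial_x^I\psi$ and the verification that the coefficients $c^I_J$ remain uniformly bounded on $K$; this is where the geometry of the cone really enters, through the pointwise estimate $|x|/t<1$. A minor care point is that the Euclidean ball $B(x_0,t_0/2)$ need not lie entirely in $K$, but since the functions of interest are compactly supported in $K$ (one may extend $\psi$ by zero without affecting the $L^2$ norm), this causes no difficulty in the localized Sobolev step.
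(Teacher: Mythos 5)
Your overall strategy---parametrizing $H_s$ by $x$ so that $\partial_{x^a}\psi=\underline{\partial}_a\phi=t^{-1}L_a\phi$, expanding $\partial_x^I\psi$ as $t^{-|I|}$ times boost derivatives $L^J\phi$ with coefficients built from $x^a/t$, and then applying a rescaled local Sobolev inequality on balls $B(x_0,t_0/2)$ where $t\sim t_0$---is precisely the classical argument behind this proposition; the paper itself offers no proof and simply refers to Klainerman, H\"ormander and LeFloch--Ma, where this is the proof given. The chain-rule induction, the comparability $t\sim t_0$ on $B(x_0,t_0/2)$, and the scaling factor $t_0^{2|I|-n}$ are all correct (note that $|x|/t<1$ holds automatically on $H_s$ since $t=\sqrt{s^2+|x|^2}$, so the boundedness of the coefficients $c^I_J$ does not even require the cone; the cone enters only through the support assumption needed for your extension-by-zero remark).

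There is, however, a genuine gap at the one step you treat as routine: the ``Sobolev embedding at order $\lceil n/2\rceil$'' on the unit ball. For even $n$ this is the borderline case and fails: $H^{n/2}(B_1)\not\subset L^\infty(B_1)$; in particular for $n=2$ one has $H^{1}(B_1)\not\subset L^\infty(B_1)$ (the usual iterated-logarithm example, and compact support does not help). Since the paper carries out all details precisely in dimension $n=2$, your argument as written breaks down in exactly the relevant case; it is valid only for odd $n$, where $\lceil n/2\rceil=\lfloor n/2\rfloor+1>n/2$. The repair is to use $\lfloor n/2\rfloor+1$ boost derivatives (so $|I|\le 2$ when $n=2$), after which every step of your proof goes through verbatim and yields the stated decay $t^{-n/2}$; this is also consistent with how the estimate is actually applied later in the paper (the sums in Lemma \ref{Lemma4.1} run over $|I|\le 2$ for $n=2$), so the exponent $\lceil n/2\rceil$ in the statement should be understood as $\lfloor n/2\rfloor+1$ rather than taken at face value for even $n$, and your write-up should flag and correct this rather than invoke the borderline embedding.
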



\section{The nonlinear structure in the semi-hyperboloidal frame}
\label{Section3}

\subsection*{Null forms}

We must express all the nonlinear terms in the semi-hyperboloidal frame. 
First of all, we consider the following null forms
\bel{3.1}
\aligned
&m^{\mu\nu}m^{\alpha\beta}\del_\alpha \phi\del_\beta \phi
\del^{2}_{\mu\nu}\phi-m^{\mu\alpha}m^{\nu\beta}\del_\alpha \phi\del_\beta \phi
\del^{2}_{\mu\nu}\phi\
\\
&=
-(\del_{t}\phi)^2\Delta\phi-|\nabla\phi|^{2}\del^{2}_{t}\phi
+|\nabla\phi|^{2}\Delta\phi+\sum_{i=1}^{2}2\del_{t}\phi\del_{i}\phi\del_{ti}\phi
-\sum_{i,j=1}^{2}\del_{i}\phi\del_{j}\phi\del_{ij}\phi.
\endaligned
\ee
By a straightforward but tedious calculation, we obtain the following identities
$$
\aligned
&-(\del_{t}\phi)^2\Delta\phi-|\nabla\phi|^{2}\del^{2}_{t}\phi
+\sum_{i=1}^{2}2\del_{t}\phi\del_{i}\phi\del_{ti}\phi
\\
&=-\sum_{i=1}^{2}(\underline{\del}_{t}\phi)^2(-\frac{x^{i}}{t}\underline
{\del}_{t}+\underline{\del}_{i})(-\frac{x^{i}}{t}\underline{\del}_{t}\phi
+\underline{\del}_{i}\phi)
-\sum_{i=1}^{2}(-\frac{x^{i}}{t}\underline{\del}_{t}\phi
+\underline{\del}_{i}\phi)^{2}(\underline{\del}_{t}^{2}\phi)
\\
&\quad +2\sum_{i=1}^{2}\underline{\del}_{t}\phi(-\frac{x^{i}}{t}\underline{\del}_{t}\phi
+\underline{\del}_{i}\phi)\underline{\del}_{t}
(-\frac{x^{i}}{t}\underline{\del}_{t}\phi+\underline{\del}_{i}\phi)
\\
&=\sum_{i=1}^{2}\Big(\frac{x^{i}}{t}(\underline{\del}_{t}\phi)^{2}
\underline{\del}_{i}\underline{\del}_{t}\phi
-\frac{x^{i}}{t}(\underline{\del}_{t}\phi)^{2}
\underline{\del}_{t}\underline{\del}_{i}\phi
-(\underline{\del}_{t}\phi)^{2}\underline{\del}_{i}\underline{\del}_{i}\phi
-(\underline{\del}_{i}\phi)^{2}\underline{\del}^{2}_{t}\phi
+2\underline{\del}_{t}\phi\underline{\del}_{i}\phi\underline{\del}_{t}
\underline{\del}_{i}\phi
\\
&\quad +\frac{2(t^2-r^2)}{t^{3}}(\underline{\del}_{t}\phi)^{3}
+2\frac{x^{i}}{t^{2}}(\underline{\del}_{t}\phi)^{2}\underline{\del}_{i}\phi\Big)
\endaligned
$$
and therefore 
$$
\aligned
&-(\del_{t}\phi)^2\Delta\phi-|\nabla\phi|^{2}\del^{2}_{t}\phi
+\sum_{i=1}^{2}2\del_{t}\phi\del_{i}\phi\del_{ti}\phi
\\
&=\frac{r^{2}}{t^{3}}(\underline{\del}_{t}\phi)^{3}+\sum_{i=1}^2\Big(-(\underline{\del}_{t}\phi)^{2}\underline{\del}_{i}\underline{\del}_{i}\phi
-(\underline{\del}_{i}\phi)^{2}\underline{\del}^{2}_{t}\phi
+2\underline{\del}_{t}\phi\underline{\del}_{i}\phi\underline{\del}_{t}
\underline{\del}_{i}\phi
  +\frac{2(t^2-r^2)}{t^{3}}(\underline{\del}_{t}\phi)^{3}
+2\frac{x^{i}}{t^{2}}(\underline{\del}_{t}\phi)^{2}\underline{\del}_{i}\phi\Big). 
\endaligned
$$
We have 
$$
\aligned
&|\nabla\phi|^{2}\Delta\phi-\sum_{i,j=1}^{2}\del_{i}\phi\del_{j}\phi\del_{ij}\phi
=(\sum_{i=1}^{2}(\del_{i}\phi)^{2})(\sum_{j=1}^{2}\del^{2}_{j}\phi)
-\sum_{i,j=1}^{2}\del_{i}\phi\del_{j}\phi\del_{ij}\phi
\\
&=(\del_{1}\phi)^2\del_{2}^{2}\phi+(\del_{2}\phi)^2\del_{1}^{2}\phi
-2\del_{1}\phi\del_{2}\phi\del_{1}\del_{2}\phi
\\
&=(-\frac{x^{1}}{t}\underline{\del}_{t}\phi+
\underline{\del}_{1}\phi)^2
(-\frac{x^{2}}{t}\underline{\del}_{t}+\underline{\del}_{2})
(-\frac{x^{2}}{t}\underline{\del}_{t}\phi+\underline{\del}_{2}\phi)
\\
&+(-\frac{x^{2}}{t}\underline{\del}_{t}\phi+
\underline{\del}_{2}\phi)^2
(-\frac{x^{1}}{t}\underline{\del}_{t}+\underline{\del}_{1})
(-\frac{x^{1}}{t}\underline{\del}_{t}\phi+\underline{\del}_{1}\phi)
\\
&-2(-\frac{x^{1}}{t}\underline{\del}_{t}\phi
+\underline{\del}_{1}\phi)
(-\frac{x^{2}}{t}\underline{\del}_{t}\phi
+\underline{\del}_{2}\phi)
(-\frac{x^{1}}{t}\underline{\del}_{t}+\underline{\del}_{1})
(-\frac{x^{2}}{t}\underline{\del}_{t}\phi+\underline{\del}_{2}\phi)
\endaligned
$$
and this leads us to the general identity 
$$
\aligned
&|\nabla\phi|^{2}\Delta\phi-\sum_{i,j=1}^{2}\del_{i}\phi\del_{j}\phi\del_{ij}\phi
=(\sum_{i=1}^{2}(\del_{i}\phi)^{2})(\sum_{j=1}^{2}\del^{2}_{j}\phi)
-\sum_{i,j=1}^{2}\del_{i}\phi\del_{j}\phi\del_{ij}\phi
\\
&=(\underline{\del}_{t}\phi)^2\Big(\frac{(x^1)^2}{t^2}
\underline{\del}_{2}^2\phi+\frac{(x^2)^2}{t^2}
\underline{\del}_{1}^2\phi
-2\frac{x^{1}x^{2}}{t^{2}}\underline{\del}_{1}\underline{\del}_{2}\phi\Big)
\\
&+\underline{\del}_{t}^2\phi\Big(
\frac{(x^{2})^2}{t^2}(\underline{\del}_{1}\phi)^2+
\frac{(x^{1})^2}{t^2}(\underline{\del}_{2}\phi)^2
-2\frac{x^{1}x^{2}}{t^2}\underline{\del}_{1}\phi\underline{\del}_{2}\phi\Big)
\\
&-\frac{x^{2}}{t}(\underline{\del}_{1}\phi)^2(\underline{\del}_{t}\underline{\del}_{2}\phi
-\underline{\del}_{2}\underline{\del}_{t}\phi)
-\frac{x^{1}}{t}(\underline{\del}_{2}\phi)^2(\underline{\del}_{t}\underline{\del}_{1}\phi
-\underline{\del}_{1}\underline{\del}_{t}\phi)
\\
&+2\frac{x^{1}}{t}\underline{\del}_{1}\phi\underline{\del}_{2}\phi
\underline{\del}_{t}\underline{\del}_{2}\phi
+2\frac{x^{2}}{t}\underline{\del}_{1}\phi\underline{\del}_{2}\phi
\underline{\del}_{1}\underline{\del}_{t}\phi
+(\underline{\del}_{1}\phi)^2\underline{\del}_{2}\underline{\del}_{2}\phi
+(\underline{\del}_{2}\phi)^2\underline{\del}_{1}\underline{\del}_{1}\phi
+2\underline{\del}_{1}\phi\underline{\del}_{2}\phi\underline{\del}_{1}\underline{\del}_{2}\phi
\\
&+2\frac{x^{1}x^{2}}{t^{2}}\underline{\del}_{t}\phi
\underline{\del}_{1}\phi\underline{\del}_{2}\underline{\del}_{t}\phi
+2\frac{x^{1}x^{2}}{t^{2}}\underline{\del}_{t}\phi
\underline{\del}_{2}\phi\underline{\del}_{t}\underline{\del}_{1}\phi
 -2\frac{(x^{1})^2}{t^2}\underline{\del}_{t}\phi\underline{\del}_{2}\phi
\underline{\del}_{t}\underline{\del}_{2}\phi
-2\frac{(x^{2})^2}{t^2}\underline{\del}_{t}\phi\underline{\del}_{1}\phi
\underline{\del}_{1}\underline{\del}_{t}\phi
\\
&-2\frac{x^{1}-x^{2}}{t}\underline{\del}_{t}\phi\underline{\del}_{1}\phi
\underline{\del}_{1}\underline{\del}_{2}\phi
+2\frac{x^{1}-x^{2}}{t}\underline{\del}_{t}\phi\underline{\del}_{2}\phi
\underline{\del}_{1}\underline{\del}_{2}\phi
\\
&-\frac{r^2}{t^{3}}(\underline{\del}_{t}\phi)^{3}
-\frac{1}{t}\underline{\del}_{t}\phi\Big((\underline{\del}_{1}\phi)^{2}
+(\underline{\del}_{2}\phi)^{2}\Big)
+(\underline{\del}_{t}\phi)^{2}\Big(
\frac{2x^{1}}{t^{2}}\underline{\del}_{1}\phi+
\frac{2x^{2}}{t^{2}}\underline{\del}_{2}\phi\Big).
\endaligned
$$
Combining the above two identities, we deduce that the nonlinearity of interest arising in the membrane equation reads 
\begin{eqnarray}\label{3.2}
&&-(\del_{t}\phi)^2\Delta\phi-|\nabla\phi|^{2}\del^{2}_{t}\phi
+|\nabla\phi|^{2}\Delta\phi+\sum_{i=1}^{2}2\del_{t}\phi\del_{i}\phi\del_{ti}\phi
-\sum_{i,j=1}^{2}\del_{i}\phi\del_{j}\phi\del_{ij}\phi\nonumber\\
&=&\sum_{i=1}^2\Big(-(\underline{\del}_{t}\phi)^{2}\underline{\del}_{i}\underline{\del}_{i}\phi
-(\underline{\del}_{i}\phi)^{2}\underline{\del}^{2}_{t}\phi
+2\underline{\del}_{t}\phi\underline{\del}_{i}\phi\underline{\del}_{t}
\underline{\del}_{i}\phi
\Big)\nonumber\\
&&+(\underline{\del}_{t}\phi)^2\Big(\frac{(x^1)^2}{t^2}
\underline{\del}_{2}^2\phi+\frac{(x^2)^2}{t^2}
\underline{\del}_{1}^2\phi
-2\frac{x^{1}x^{2}}{t^{2}}\underline{\del}_{1}\underline{\del}_{2}\phi\Big)\nonumber\\
&&+\underline{\del}_{t}^2\phi\Big(
\frac{(x^{2})^2}{t^2}(\underline{\del}_{1}\phi)^2+
\frac{(x^{1})^2}{t^2}(\underline{\del}_{2}\phi)^2
-2\frac{x^{1}x^{2}}{t^2}\underline{\del}_{1}\phi\underline{\del}_{2}\phi\Big)\nonumber
-\frac{x^{2}}{t}(\underline{\del}_{1}\phi)^2(\underline{\del}_{t}\underline{\del}_{2}\phi
-\underline{\del}_{2}\underline{\del}_{t}\phi)
-\frac{x^{1}}{t}(\underline{\del}_{2}\phi)^2(\underline{\del}_{t}\underline{\del}_{1}\phi
-\underline{\del}_{1}\underline{\del}_{t}\phi)\nonumber\\
&&+2\frac{x^{1}}{t}\underline{\del}_{1}\phi\underline{\del}_{2}\phi
\underline{\del}_{t}\underline{\del}_{2}\phi
+2\frac{x^{2}}{t}\underline{\del}_{1}\phi\underline{\del}_{2}\phi
\underline{\del}_{1}\underline{\del}_{t}\phi\nonumber
+(\underline{\del}_{1}\phi)^2\underline{\del}_{2}\underline{\del}_{2}\phi
+(\underline{\del}_{2}\phi)^2\underline{\del}_{1}\underline{\del}_{1}\phi
+2\underline{\del}_{1}\phi\underline{\del}_{2}\phi\underline{\del}_{1}\underline{\del}_{2}\phi\nonumber\\
&&+2\frac{x^{1}x^{2}}{t^{2}}\underline{\del}_{t}\phi
\underline{\del}_{1}\phi\underline{\del}_{2}\underline{\del}_{t}\phi
+2\frac{x^{1}x^{2}}{t^{2}}\underline{\del}_{t}\phi
\underline{\del}_{2}\phi\underline{\del}_{t}\underline{\del}_{1}\phi\nonumber\\
&&-2\frac{(x^{1})^2}{t^2}\underline{\del}_{t}\phi\underline{\del}_{2}\phi
\underline{\del}_{t}\underline{\del}_{2}\phi
-2\frac{(x^{2})^2}{t^2}\underline{\del}_{t}\phi\underline{\del}_{1}\phi
\underline{\del}_{1}\underline{\del}_{t}\phi\nonumber
-2\frac{x^{1}-x^{2}}{t}\underline{\del}_{t}\phi\underline{\del}_{1}\phi
\underline{\del}_{1}\underline{\del}_{2}\phi
+2\frac{x^{1}-x^{2}}{t}\underline{\del}_{t}\phi\underline{\del}_{2}\phi
\underline{\del}_{1}\underline{\del}_{2}\phi\nonumber\\
&&
-\frac{1}{t}\underline{\del}_{t}\phi\Big((\underline{\del}_{1}\phi)^{2}
+(\underline{\del}_{2}\phi)^{2}\Big)
+(\underline{\del}_{t}\phi)^{2}\Big(
\frac{2x^{1}}{t^{2}}\underline{\del}_{1}\phi+
\frac{2x^{2}}{t^{2}}\underline{\del}_{2}\phi\Big)\nonumber
+\frac{2(t^2-r^2)}{t^{3}}(\underline{\del}_{t}\phi)^{3}
+2\frac{x^{i}}{t^{2}}(\underline{\del}_{t}\phi)^{2}\underline{\del}_{i}\phi, 
\end{eqnarray}
which with bvious notation takes the following general form
$$
T^{\alpha\beta\delta\gamma}\del_\alpha \phi\del_\beta \phi\del_{\gamma}\del_{\delta}\phi.
$$
We have thus arrived at the following conclusion. 

\begin{lemma}\label{Lemma3.1}
For the null cubic form $T^{\alpha\beta\delta\gamma}\del_\alpha \phi
\del_\beta \phi\del_{\gamma}\del_{\delta}\phi$ with constant coefficients $T^{\alpha\beta\delta\gamma}$ satisfying the above null condition and for any index $I$, one has
$$
\aligned
& |Z^{I}(T^{\alpha\beta\delta\gamma}\del_\alpha \phi\del_\beta \phi\del_{\gamma}\del_{\delta}\phi)|
\\
&\lesssim (s/t)^{2}t^{-1}\sum_{|I_{1}|+|I_{2}|+|I_{3}|\leq|I|}
|Z^{I_{1}}\underline{\del}_{t}\phi Z^{I_{2}}\underline{\del}_{t}\phi
Z^{I_{3}}\underline{\del}_{t}\phi|
 +t^{-1}\sum_{|I_{1}|+|I_{2}|+|I_{3}|\leq|I|}
|Z^{I_{1}}\underline{\del}_\alpha \phi Z^{I_{2}}\underline{\del}_\beta \phi
Z^{I_{3}}\underline{\del}_{i}\phi|
\\
&+\sum_{|I_{1}|+|I_{2}|+|I_{3}|\leq|I|}
|Z^{I_{1}}\underline{\del}_\alpha \phi Z^{I_{2}}\underline{\del}_\beta \phi
Z^{I_{3}}\underline{\del}_{i}\underline{\del}_{j}\phi|
 +\sum_{|I_{1}|+|I_{2}|+|I_{3}|\leq|I|}
|Z^{I_{1}}\underline{\del}_{j}\phi Z^{I_{2}}\underline{\del}_\beta \phi
Z^{I_{3}}\underline{\del}_{i}\underline{\del}_\alpha \phi|. 
\endaligned
$$
\end{lemma}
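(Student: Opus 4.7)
The plan is to take the explicit semi-hyperboloidal decomposition displayed in formula \eqref{3.2} as the starting point, apply $Z^I$ by Leibniz's rule, and verify term by term that each resulting contribution is absorbed into one of the four sums on the right-hand side. The decomposition \eqref{3.2} naturally splits into four types of contributions: (a) a $(s^2/t^3)(\underline{\del}_t\phi)^3$ term coming from the cancellation that produced $2(t^2-r^2)/t^3$; (b) terms carrying a prefactor $1/t$ or $x^i/t^2$ multiplying three first derivatives (these arise from the commutators $[\underline{\del}_a,\underline{\del}_t]$ used when collecting $\underline{\del}_i\underline{\del}_t\phi-\underline{\del}_t\underline{\del}_i\phi$); (c) terms of the form $(\underline{\del}_\alpha\phi)(\underline{\del}_\beta\phi)\underline{\del}_i\underline{\del}_j\phi$ with two spatial semi-hyperboloidal second derivatives; and (d) terms that involve one mixed second derivative $\underline{\del}_\alpha\underline{\del}_i\phi$ balanced by an $\underline{\del}_j\phi$ coming from the $x^i/t$ coefficient. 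These four families are exactly what the four sums in the stated estimate track.

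The analytic input needed is twofold. First, the coefficient functions appearing in \eqref{3.2}, namely $x^a/t$, $1/t$, and $s^2/t^3=(s/t)^2 t^{-1}$, are smooth on the cone $K$ and satisfy the weighted bounds
\[
|Z^J(x^a/t)|\lesssim 1,\qquad |Z^J(t^{-1})|\lesssim t^{-1},\qquad |Z^J((s/t)^2)|\lesssim (s/t)^2,
\]
uniformly for every multi-index $J$; these follow because $\del_\alpha$ lowers homogeneity appropriately and the boosts $L_a$ annihilate $s$ and act on $x^a/t$ by homogeneous functions of degree zero. Second, I will use that the commutator $[Z,\underline{\del}_\alpha]$ is a linear combination of the fields $\underline{\del}_\beta$ with coefficients of the same type (bounded, or with an extra factor $t^{-1}$), as a consequence of \eqref{C} together with the explicit formulas in \eqref{1.8}. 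Both statements are standard in the hyperboloidal framework of \cite{PLF-YM-book}.

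With these two ingredients the proof reduces to careful bookkeeping. For each of the four families above I apply Leibniz's rule to $Z^I(c(t,x)\,\underline{\del}\phi\,\underline{\del}\phi\,\underline{\del}\underline{\del}\phi)$ and distribute $Z^I$ onto the coefficient $c$ and onto the three factors involving $\phi$. The weight estimates absorb the derivatives falling on $c$, while the commutator estimate lets me rewrite $Z^{I_j}\underline{\del}_\alpha\phi$ as $\underline{\del}_\alpha Z^{I_j}\phi$ plus lower-order corrections already of the required form; the same works for the second derivative factor after commuting $Z^{I_j}$ through both $\underline{\del}$'s. Summing the resulting bounds over the (finitely many) terms in \eqref{3.2} gives precisely the four sums in the statement.

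The main obstacle will be the second-derivative term: when $Z^{I_3}$ hits $\underline{\del}_\alpha\underline{\del}_\beta\phi$, the commutators $[Z^{I_3},\underline{\del}_\alpha\underline{\del}_\beta]$ generate terms in which the distinction between a ``good'' direction ($\underline{\del}_i$) and the ``bad'' direction ($\underline{\del}_t$) can be partially lost, and one must check that no purely $\underline{\del}_t\underline{\del}_t\phi$ contribution appears without being paired with an $(s/t)^2 t^{-1}$ weight from group (a). This is precisely guaranteed by the null structure: in \eqref{3.2} every $\underline{\del}_t\underline{\del}_t\phi$ is multiplied by $(\underline{\del}_i\phi)^2$ or by coefficients of order $(x^i/t)^2$, which after applying $Z^I$ keep the favorable spatial semi-hyperboloidal derivative in the product, so that every surviving term fits into the third or fourth sum on the right-hand side.
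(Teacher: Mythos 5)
Your proposal follows essentially the same route as the paper: the paper's own justification of Lemma \ref{Lemma3.1} is nothing more than the explicit semi-hyperboloidal computation \eqref{3.1}--\eqref{3.2}, after which the estimate is obtained exactly as you describe, by applying $Z^{I}$ with the Leibniz rule and using the homogeneity bounds on the coefficients $x^{a}/t$, $t^{-1}$, $(s/t)^{2}$ together with the commutator relations \eqref{C}. The only caveat---one you share with the paper's own loose remark that $\underline{\del}_{t}\underline{\del}_{t}\phi$ ``does not arise''---is that \eqref{3.2} does retain terms of the form $\underline{\del}_{i}\phi\,\underline{\del}_{j}\phi\,\underline{\del}_{t}^{2}\phi$, which are not literally of the third or fourth listed form (whose second-order factor carries a good index) but instead carry two good first-order factors; your bookkeeping treats them exactly as the paper implicitly does.
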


At this stage, we see that the nonlinear terms having more than three derivatives with respect to $\underline{\del}_{t}$ contain the {\sl favorable coefficient} $(s/t)^{2}$ when expressed in the semi-hyperboloidal frame. Moreover, the term $\underline{\del}_{t}\underline{\del}_{t}\phi$ {\sl does not arise}. We will refer to this nonlinear structure as the ``double null condition'', and it will pla an essential role in the following estimates.


\subsection*{General nonlinear terms}

We thus have to estimate the following general nonlinear terms
\bel{3.3}
(g^{\mu\nu}-m^{\mu\nu})g^{\alpha\beta}\del_\alpha \phi\del_\beta \phi
\del_\mu \del_{\nu}\phi,
\ee
\bel{3.4}
(g^{\mu\nu}-m^{\mu\nu})(g^{\alpha\beta}-m^{\alpha\beta})\del_\alpha \phi\del_\beta \phi
\del_\mu \del_{\nu}\phi, 
\ee
and
\be
g^{\alpha\beta}\Gamma^\mu\del_\alpha \phi\del_\beta \phi\del_{\gamma}\phi,
\quad
g^{\mu\alpha}g^{\nu\beta}\Gamma_{\mu\nu}^{\gamma}\del_\alpha \phi
\del_\beta \phi\del_{\gamma}\phi.
\ee
From our assumptions on the background metric, we can obtain some decay and so we rewrite the above terms as
$$
D^{\alpha\beta\gamma\delta}\del_\alpha \phi\del_\beta \phi
\del_{\delta}\del_{\gamma}\phi,
\qquad D^{\alpha\beta\gamma}\del_\alpha \phi\del_\beta \phi\del_{\gamma}\phi,
$$
in which 
the coefficients $D^{\alpha\beta\gamma\delta}$ enjoy the following decay property: 
\be
|Z^{I}D^{\alpha\beta\gamma\delta}|\leq C\delta(s/t)^{2}s^{-1-\gamma},
\qquad
|Z^{I}D^{\alpha\beta\gamma}|\leq C\delta(s/t)^{2}s^{-1-\gamma}.
\ee
We thus have the following estimates on the above nonlinear terms.

\begin{lemma}\label{Lemma3.3}
Under the assumptions made on the background metric, one has 
$$
\aligned
& |Z^{I}(D^{\alpha\beta\gamma\delta}\del_\alpha \phi
\del_\beta \phi\del_{\delta}\del_{\gamma}\phi)|
\\
&\lesssim
\sum_{|I_{1}+|I_{2}|+|I_{3}|\leq|I|} \delta(s/t)^{2}s^{-1-\gamma}
\Big(|Z^{I_{1}}\underline{\del}_\alpha \phi
Z^{I_{2}}\underline{\del}_\beta \phi Z^{I_{3}}\underline{\del}_{\delta}\underline{\del}_{\gamma}\phi|
+t^{-1}|Z^{I_{1}}\underline{\del}_\alpha \phi
Z^{I_{2}}\underline{\del}_\beta \phi Z^{I_{3}}\underline{\del}_{\delta}\phi|\Big)
\endaligned
$$
and
\be
|Z^{I}(D^{\alpha\beta\gamma}\del_\alpha \phi
\del_\beta \phi\del_{\gamma}\phi)|
\lesssim
 \sum_{|I_{1}+|I_{2}|+|I_{3}|\leq|I|} \delta(s/t)^{2}s^{-1-\gamma}
|Z^{I_{1}}\underline{\del}_\alpha \phi
Z^{I_{2}}\underline{\del}_\beta \phi Z^{I_{3}}\underline{\del}_{\gamma}\phi|.
\ee
\end{lemma}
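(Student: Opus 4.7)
The plan is to reduce both estimates to a straightforward Leibniz expansion followed by frame conversion, using the decomposition $\del_\alpha = \Psi_\alpha^\beta \underline{\del}_\beta$ from \eqref{2.1} together with the hypothesis on $Z^I D^{\alpha\beta\gamma\delta}$ and $Z^I D^{\alpha\beta\gamma}$. The second inequality is the simpler of the two, so I would handle it first, and then adapt the argument to accommodate the second-order derivative in the first inequality, where the commutator $[\del, \Psi]$ must be tracked.

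First I would apply $Z^I$ to $D^{\alpha\beta\gamma}\del_\alpha\phi\del_\beta\phi\del_\gamma\phi$ via the Leibniz rule, producing a finite sum indexed by multi-indices $I_0+I_1+I_2+I_3 = I$ of terms of the form $Z^{I_0}D^{\alpha\beta\gamma}\cdot Z^{I_1}\del_\alpha\phi\cdot Z^{I_2}\del_\beta\phi\cdot Z^{I_3}\del_\gamma\phi$. The hypothesis on $D$ gives the factor $\delta(s/t)^2 s^{-1-\gamma}$ out front. I would then rewrite each $\del_\mu\phi = \Psi_\mu^\nu\underline{\del}_\nu\phi$ and use the fact that the components of $\Psi$ are homogeneous of degree zero (so $Z^J\Psi_\mu^\nu$ is uniformly bounded for every $J$, by the commutator relations \eqref{C} for $L_a$ and direct computation for $\del_\alpha$). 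Absorbing the bounded factors into the constant yields the claimed estimate with $Z^{I_j}\underline{\del}_\alpha\phi$ factors.

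Next I would treat the quartic term $D^{\alpha\beta\gamma\delta}\del_\alpha\phi\del_\beta\phi\del_\delta\del_\gamma\phi$ in the same way, but applying the second-order decomposition \eqref{2.4}, which reads $\del_\delta\del_\gamma\phi = \Psi_\delta^{\delta'}\Psi_\gamma^{\gamma'}\underline{\del}_{\delta'}\underline{\del}_{\gamma'}\phi + \Psi_\delta^{\delta'}(\del_{\delta'}\Psi_\gamma^{\gamma'})\underline{\del}_{\gamma'}\phi$. The first piece, after Leibniz expansion and conversion of the first-order factors as above, gives the $|Z^{I_1}\underline{\del}_\alpha\phi\, Z^{I_2}\underline{\del}_\beta\phi\, Z^{I_3}\underline{\del}_\delta\underline{\del}_\gamma\phi|$ contribution. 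The second piece supplies the $t^{-1}$-weighted contribution because $\del_{\delta'}\Psi_\gamma^{\gamma'}$ is homogeneous of degree $-1$ (each entry of $\Psi$ is either constant or a ratio $x^i/t$, whose derivatives decay like $t^{-1}$ on the cone $K$); again every $Z^J$ applied to such a coefficient remains bounded by $t^{-1}$.

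The main obstacle, and the only nontrivial bookkeeping step, is verifying that commuting $Z^{I_j}$ with the semi-hyperboloidal frame vectors and with the homogeneous coefficients $\Psi$, $\del\Psi$ does not damage the decay. This is where the commutator relations \eqref{C} and the degree-zero homogeneity of $\Psi$ are essential: every commutator $[Z,\underline{\del}_\mu]$ produces terms of the form (bounded function) $\times \underline{\del}$, so iterating gives bounds in terms of $Z^{I'}\underline{\del}$ with $|I'|\leq |I_j|$, which can be reindexed into the sums on the right-hand sides. Once this is checked, the hypotheses $|Z^{I_0}D|\lesssim \delta(s/t)^2 s^{-1-\gamma}$ deliver the stated prefactor and the proof is complete.
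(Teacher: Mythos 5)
Your argument is correct and is essentially the proof the paper intends (the paper in fact states Lemma \ref{Lemma3.3} without proof): a Leibniz expansion, the assumed bound $|Z^{I_0}D|\lesssim \delta(s/t)^{2}s^{-1-\gamma}$, conversion to the semi-hyperboloidal frame via the homogeneous degree-zero matrix $\Psi$, and the decomposition \eqref{2.4} whose $\del\Psi$ correction (homogeneous of degree $-1$, hence $O(t^{-1})$ on $K$ and stable under the admissible fields $Z$) produces the $t^{-1}$-weighted term. No gaps; the commutator bookkeeping you describe is exactly the standard step needed.
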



\subsection*{Linear terms with variable coefficients}

At last, we have to consider the following linear terms
\be
(g^{\mu\nu}-m^{\mu\nu})\del_\mu \del_{\nu}\phi,
\qquad
\Gamma^\mu\del_\mu \phi.
\ee
From our decay assumptions on $g^{\mu\nu}-m^{\mu\nu}$ and $\del g^{\mu\nu}$, we have the following lproperty.  

\begin{lemma}\label{Lemma3.4}
Under the assumptions made on the background metric, one has 
\be
|Z^{I}\Big((g^{\mu\nu}-m^{\mu\nu})\del_\mu \del_{\nu}\phi\Big)|
\lesssim
 \sum_{|J|\leq|I|} \delta(s/t)^{2}s^{-1-\gamma}\Big(|Z^{J}\underline{\del}_\alpha
\underline{\del}_\beta \phi|+t^{-1}Z^{J}\underline{\del}_\alpha \phi\Big),
\ee 
\be
|Z^{I}(\Gamma^\mu\del_\mu \phi)|
\lesssim
\sum_{|J|\leq |I|} \delta(s/t)^{2}s^{-1-\gamma}|Z^{J}\underline{\del}_\alpha \phi|.
\ee
\end{lemma}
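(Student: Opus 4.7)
The plan is to reduce both estimates to a Leibniz expansion in the semi-hyperboloidal frame followed by the decay hypothesis on the coefficients. I break the argument into (i) the base case $|I|=0$, (ii) the general commutator expansion, and (iii) propagation of the metric decay to admissible derivatives.

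For the base case $|I|=0$, apply the decomposition \eqref{2.4} to $\del_\mu\del_\nu\phi$, giving
$$\del_\mu\del_\nu\phi=\Psi^{\mu'}_\mu\Psi^{\nu'}_\nu\,\underline{\del}_{\mu'}\underline{\del}_{\nu'}\phi+\Psi^{\mu'}_\mu(\del_{\mu'}\Psi^{\nu'}_\nu)\,\underline{\del}_{\nu'}\phi,$$
and analogously $\del_\mu\phi=\Psi^{\mu'}_\mu\,\underline{\del}_{\mu'}\phi$. Since the entries $\Psi^{\mu'}_\mu$ are $0$-homogeneous in $(t,x)$ and hence uniformly bounded on $K$, while their coordinate derivatives are of size $O(t^{-1})$, contracting against $g^{\mu\nu}-m^{\mu\nu}$ and invoking the hypothesis $|g^{\mu\nu}-m^{\mu\nu}|\lesssim\delta(s/t)^2 s^{-1-\gamma}$ yields the first stated inequality at order zero. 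For the Christoffel term, one observes that $|\Gamma^\mu|\lesssim|g^{-1}|\,|\del g|\lesssim\delta(s/t)^2 s^{-1-\gamma}$, so the same frame reduction applied to $\del_\mu\phi$ gives the second inequality at order zero.

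For $|I|\geq 1$ I would distribute $Z^I$ by Leibniz, splitting $Z^I\bigl(A\cdot B\bigr)$ into a sum over $|I_1|+|I_2|\leq|I|$ of $Z^{I_1}A\cdot Z^{I_2}B$, where $A$ stands for the coefficient $(g^{\mu\nu}-m^{\mu\nu})$ or $\Gamma^\mu$ and $B$ for the frame-converted derivatives of $\phi$. For the second factor, use the commutator identities \eqref{C} together with the standard principle that any admissible derivative $Z$ commutes with $\underline{\del}_\alpha$ modulo a $t^{-1}$-weighted lower-order term; iterating, one rewrites $Z^{I_2}(\underline{\del}_\alpha\underline{\del}_\beta\phi)$ and $Z^{I_2}(\underline{\del}_\alpha\phi)$ as $\underline{\del}_\alpha\underline{\del}_\beta Z^J\phi$ (respectively $\underline{\del}_\alpha Z^J\phi$) plus an error of schematic form $t^{-1}\underline{\del}_\beta Z^{J'}\phi$ with $|J|,|J'|\leq|I_2|$, which is precisely the extra $t^{-1}\underline{\del}_\alpha\phi$ term already present on the right-hand side.

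The main technical obstacle is the coefficient factor, i.e.\ verifying $|Z^{I_1}A|\lesssim\delta(s/t)^2 s^{-1-\gamma}$ from the hypothesis stated only for coordinate derivatives $\del^I g_{\mu\nu}$. I would proceed by induction on $|I_1|$: using $L_a=x^a\del_t+t\del_a$ and the inclusion $K\subset\{|x^a|\leq t\}$, each boost produces at worst a factor of $t$ multiplying a coordinate partial derivative of $g$, whose faster decay $\delta(s/t)^2 s^{-|I|-\gamma}$ absorbs the additional $t$-weight uniformly on $H_s$; a parallel computation handles $Z^{I_1}\Gamma^\mu$ via $\Gamma^\mu=g^{\alpha\beta}\Gamma^\mu_{\alpha\beta}$, whose lowest-order decay is dictated by the single derivative of $g$ already present. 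Once this metric-decay propagation is settled, assembling the three ingredients yields both claimed inequalities; everything else is bookkeeping in the semi-hyperboloidal frame and uses no new idea beyond \eqref{2.4} and \eqref{C}.
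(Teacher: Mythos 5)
Your steps (i) and (ii) are sound and coincide with what the paper intends: convert to the semi-hyperboloidal frame via \eqref{2.4}, use that the entries of $\Psi$ are bounded and homogeneous of degree zero with $\del\Psi=O(t^{-1})$, then expand $Z^{I}$ by Leibniz and commute $Z$ past $\underline{\del}_\alpha$ modulo $t^{-1}$-weighted lower-order terms, which produces exactly the $t^{-1}Z^{J}\underline{\del}_\alpha\phi$ contribution on the right-hand side. (The paper in fact gives no written proof of this lemma; it simply asserts the corresponding coefficient bounds, cf. the displayed estimate $|Z^{I}D^{\alpha\beta\gamma\delta}|\leq C\delta(s/t)^{2}s^{-1-\gamma}$ before Lemma \ref{Lemma3.3}, and lets the lemma follow by precisely this bookkeeping.)

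The genuine gap is your step (iii), the propagation $|Z^{I_{1}}(g^{\mu\nu}-m^{\mu\nu})|\lesssim\delta(s/t)^{2}s^{-1-\gamma}$ and $|Z^{I_{1}}\Gamma^{\mu}|\lesssim\delta(s/t)^{2}s^{-1-\gamma}$ from the coordinate-derivative hypothesis. The hypothesis $|\del^{I}g_{\mu\nu}|\leq\delta(s/t)^{2}s^{-|I|-\gamma}$ gains only a factor $s^{-1}$ per additional coordinate derivative, and at first order it gains nothing at all: $|g-m|$ and $|\del g|$ carry the same decay $s^{-1-\gamma}$. Each boost $L_{a}=x^{a}\del_{t}+t\del_{a}$, on the other hand, costs a factor $t$. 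Hence already one boost only yields $|L_{a}(g^{\mu\nu}-m^{\mu\nu})|\lesssim t\,\delta(s/t)^{2}s^{-1-\gamma}$, which misses the required bound by the unbounded factor $t$; with $k$ boosts the cost is $t^{k}$ against a gain of at most $s^{-(k-1)}$, and on $H_{s}\cap K$ one has $t<(s^{2}+1)/2$, so $t/s\sim s/2$ near $r=t-1$ and the ratio is nowhere uniformly bounded. Your assertion that the faster decay of $\del^{I}g$ ``absorbs the additional $t$-weight uniformly on $H_{s}$'' is therefore quantitatively false, and the induction does not close. To repair step (iii) you must either state the decay hypothesis directly for the admissible fields, i.e. assume $|Z^{I}(g^{\mu\nu}-m^{\mu\nu})|\lesssim\delta(s/t)^{2}s^{-1-\gamma}$ (which is how the paper effectively uses it, and is the natural formulation for metrics arising from the Einstein equations), or invoke additional structure of $g$ (e.g. control of good-direction derivatives of the metric) that your argument currently does not use; the pure $\del^{I}$-hypothesis alone does not control $Z^{I_{1}}(g-m)$ or $Z^{I_{1}}\Gamma^{\mu}$ with the weight $(s/t)^{2}s^{-1-\gamma}$ needed on the right-hand side.
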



\section{The $L^\infty$ and $L^2$ estimates}
\label{Section4}

Based on the Sobolev inequality and the definition of the energy, we have the following result. 

\begin{lemma}[$L^\infty$ estimates on derivatives up to first-order]
\label{Lemma4.1}
If $\phi$ is a sufficiently regular function supported in $K$, then the following estimates hold:
$$
\aligned
\sup_{H_{s}}|s\del_\alpha \phi|
& \lesssim \sum_{|I|\leq2}E_{0}^{1/2}(s,Z^{I}\phi),
\\
\sup_{{H_{s}}}|t\underline{\del}_{a}\phi|
&\lesssim \sum_{|I|\leq2}E_{0}^{1/2}(s,Z^{I}\phi),
\endaligned
$$
where the sum is over all admissible vector fields $Z$ in the list $\del_\alpha, L_{a}$.
\end{lemma}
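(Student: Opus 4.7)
My plan is to derive both estimates by applying the Sobolev inequality on hyperboloids (Proposition \ref{proposition2.1}) with $n=2$, which reads
$$
\sup_{H_{s}}t\,|f(t,x)|\;\lesssim\;\sum_{a}\sum_{|J|\leq 1}\|L_{a}^{J}f\|_{L^{2}(H_{s})},
$$
to well-chosen combinations of the derivatives of $\phi$ that are directly controlled by the natural energy $E_{0}$ defined in \eqref{1.9}. The fundamental observation is that the energy $E_{0}(s,\psi)$ controls precisely $\|\underline{\del}_{a}\psi\|_{L^{2}(H_{s})}$ and $\|(s/t)\del_{t}\psi\|_{L^{2}(H_{s})}$; therefore the task is to rewrite each boosted quantity $L_{b}^{J}(\cdot)$ as a linear combination of such ``good'' derivatives acting on some $Z^{I}\phi$ with $|I|$ not too large, and with coefficients bounded on the cone $K$ (where $|x^{a}/t|\leq 1$ and $s\leq t$).

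For the estimate on $\sup_{H_{s}}|t\,\underline{\del}_{a}\phi|$, the function $\underline{\del}_{a}\phi$ is itself of the type controlled by the energy, so Sobolev applied to $f=\underline{\del}_{a}\phi$ is the natural choice. The commutator is handled via $\underline{\del}_{a}=t^{-1}L_{a}$:
$$
L_{b}\underline{\del}_{a}\phi \;=\; (L_{b}t^{-1})L_{a}\phi + t^{-1}L_{b}L_{a}\phi \;=\; -\tfrac{x^{b}}{t}\,\underline{\del}_{a}\phi + \underline{\del}_{b}(L_{a}\phi),
$$
using $L_{b}t=x^{b}$. Both terms are immediately controlled by $E_{0}^{1/2}(s,\phi)+E_{0}^{1/2}(s,L_{a}\phi)$, which yields the desired bound by $\sum_{|I|\leq 2}E_{0}^{1/2}(s,Z^{I}\phi)$.

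For the estimate on $\sup_{H_{s}}|s\,\del_{\alpha}\phi|$, I treat the cases $\alpha=0$ and $\alpha=a$ separately. When $\alpha=0$, I apply Sobolev to the ``good'' combination $f=(s/t)\del_{t}\phi$, so that $t f = s\del_{t}\phi$. The relevant identities are $L_{a}s=0$ (since $s=\sqrt{t^{2}-r^{2}}$), $L_{a}t=x^{a}$, and $[L_{a},\del_{t}]=-\del_{a}$; combined with the decomposition $\del_{a}=\underline{\del}_{a}-(x^{a}/t)\del_{t}$ used to convert the resulting $\del_{a}\phi$ into good derivatives, they yield
$$
L_{b}\bigl((s/t)\del_{t}\phi\bigr) \;=\; -\tfrac{x^{b}}{t}(s/t)\del_{t}\phi + (s/t)\del_{t}(L_{b}\phi) - (s/t)\del_{b}\phi,
$$
and inserting $(s/t)\del_{b}\phi=(s/t)\underline{\del}_{b}\phi-(x^{b}/t)(s/t)\del_{t}\phi$ shows every term has $L^{2}(H_{s})$ norm controlled by $E_{0}^{1/2}(s,Z^{I}\phi)$ with $|I|\leq 1$. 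When $\alpha=a$, the identity $\del_{a}=\underline{\del}_{a}-(x^{a}/t)\del_{t}$ reduces the estimate on $s\del_{a}\phi$ to the already-established bounds on $s\del_{t}\phi$ (times the bounded factor $x^{a}/t$) and on $s\underline{\del}_{a}\phi$, which in turn is dominated by $t\underline{\del}_{a}\phi$ since $s\leq t$ on $K$.

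The calculation is essentially a bookkeeping exercise; there is no genuine analytic obstacle. The only point worth emphasizing is that at each step one must carefully choose \emph{which} combination to feed into the Sobolev inequality — namely the energy-controlled quantities $\underline{\del}_{a}\psi$ and $(s/t)\del_{t}\psi$ rather than the naive $\del_{\alpha}\psi$ — and then use the commutation identities $L_{a}s=0$, $L_{a}t=x^{a}$, $[L_{a},\del_{t}]=-\del_{a}$ together with the semi-hyperboloidal decomposition \eqref{1.8} to convert every intermediate expression back into such quantities before taking $L^{2}$ norms.
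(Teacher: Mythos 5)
Your proof is correct and follows exactly the route the paper intends: the paper states Lemma \ref{Lemma4.1} without detailed proof, attributing it to Proposition \ref{proposition2.1} combined with the definition \eqref{1.9} of $E_{0}$, and your argument (apply the hyperboloidal Sobolev inequality to the energy-controlled quantities $\underline{\del}_{a}\phi$ and $(s/t)\del_{t}\phi$, then commute with the boosts $L_{b}$ using $L_{b}s=0$, $L_{b}t=x^{b}$, $[L_{b},\del_{t}]=-\del_{b}$ and the decomposition $\del_{a}=\underline{\del}_{a}-(x^{a}/t)\del_{t}$) is precisely the standard filling-in of those details. No gaps; the bounded-coefficient checks on $K$ ($|x^{a}/t|\leq1$, $s\leq t$) are all in place.
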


We thus see that, for the ``good'' derivatives we get a decay rate of $t^{-1}$, while for the ``bad'' derivatives, we only get a decay $s^{-1}=\frac{1}{\sqrt{(t+r)(t-r)}}$ so that near the lightcone, we only get $t^{-1/2}$.
On the other hand, for the second-order derivatives with at least one good derivative $\underline{\del}_{a}$, we have the following result, which shows that the good second-order derivatives has an extra decay $t^{-1}$. 
Here, we use $\underline{\del}_{a}=t^{-1}L_{a}$.

\begin{lemma}[Bounds on the second order derivatives]
\label{Lemma4.3}
 For every sufficiently regular function $\phi$ supported in the cone $K$, the following estimates hold:
\be
\sup_{H_{s}}|ts\underline{\del}_{a}\underline{\del}_\alpha \phi|
+\sup_{H_{s}}|ts\underline{\del}_\alpha \underline{\del}_{a}\phi|
\lesssim \sum_{|I|\leq3}E^{1/2}_{0}(s, Z^{I}u),
\ee
\be
\int_{H_{s}}|s\underline{\del}_{a}\underline{\del}_\alpha Z^{I-1}u|^{2}dx
+\int_{H_{s}}|s\underline{\del}_\alpha \underline{\del}_{a}Z^{I-1}u|^{2}dx
\lesssim \sum_{|I|}E_{0}(s,Z^{I}u).
\ee
\end{lemma}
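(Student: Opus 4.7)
The plan is to exploit the algebraic identity $\underline{\del}_{a}=t^{-1}L_{a}$, which trades one semi-hyperboloidal derivative for a Lorentz boost $L_{a}$ at the price of a single factor $t^{-1}$. This converts the desired second-order bounds into first-order bounds on $L_{a}\phi$, to which Lemma~\ref{Lemma4.1} applies directly (with the order of vector fields shifted up by one, which is precisely why the hypothesis $|I|\le 3$ appears).

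For the $L^{\infty}$ bound on $ts\,\underline{\del}_{a}\underline{\del}_{\alpha}\phi$, I would first write
$$
ts\,\underline{\del}_{a}\underline{\del}_{\alpha}\phi = s\,L_{a}\underline{\del}_{\alpha}\phi
= s\,\underline{\del}_{\alpha}(L_{a}\phi) + s\,[L_{a},\underline{\del}_{\alpha}]\phi.
$$
A short calculation using \eqref{C} together with $[L_{a},\del_{t}]=-\del_{a}$ and $[L_{a},\del_{b}]=-\delta_{ab}\del_{t}$ shows that $[L_{a},\underline{\del}_{\alpha}]$ is a first-order operator whose coefficients are linear combinations of $1$ and $x^{b}/t$, hence uniformly bounded on $K$; thus $s\,[L_{a},\underline{\del}_{\alpha}]\phi$ is controlled pointwise by a sum of $s|\del_{\beta}\phi|$ terms, each estimated by the first inequality of Lemma~\ref{Lemma4.1}. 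For the main piece $s\,\underline{\del}_{\alpha}(L_{a}\phi)$, I would split cases: when $\alpha=0$, Lemma~\ref{Lemma4.1} applied to $L_{a}\phi$ gives $\sup|s\,\del_{t}(L_{a}\phi)|\lesssim \sum_{|I|\le 2}E_{0}^{1/2}(s,Z^{I}L_{a}\phi)$, which is bounded by $\sum_{|I|\le 3}E_{0}^{1/2}(s,Z^{I}\phi)$ since $L_{a}\in Z$; when $\alpha=b$, I would rewrite $\underline{\del}_{b}L_{a}\phi=t^{-1}L_{b}L_{a}\phi$, use $s/t\le 1$ on $H_{s}$, and apply the second inequality of Lemma~\ref{Lemma4.1} to $L_{a}\phi$. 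The bound on $ts\,\underline{\del}_{\alpha}\underline{\del}_{a}\phi$ follows from Leibniz, namely $\underline{\del}_{\alpha}\underline{\del}_{a}\phi=(\underline{\del}_{\alpha}t^{-1})L_{a}\phi + t^{-1}\underline{\del}_{\alpha}L_{a}\phi$, where $\underline{\del}_{\alpha}(t^{-1})=O(t^{-2})$; the first piece then costs $(s/t)\,|L_{a}\phi|=(s/t)\,t|\underline{\del}_{a}\phi|$, again controlled by Lemma~\ref{Lemma4.1}, while the second piece is exactly what was estimated above.

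For the $L^{2}$ estimate the same commutation strategy works, but one now invokes the very definition \eqref{1.9} of $E_{0}$, which gives $\|(s/t)\del_{t}\psi\|_{L^{2}(H_{s})}+\sum_{a}\|\underline{\del}_{a}\psi\|_{L^{2}(H_{s})}\lesssim E_{0}^{1/2}(s,\psi)$. Applying this to $\psi=L_{a}Z^{I-1}u$ yields
$$
\|s\,\underline{\del}_{\alpha}L_{a}Z^{I-1}u\|_{L^{2}(H_{s})} = \|(s/t)\,t\,\underline{\del}_{\alpha}(L_{a}Z^{I-1}u)\|_{L^{2}(H_{s})}\lesssim E_{0}^{1/2}(s,L_{a}Z^{I-1}u),
$$
after rewriting $\del_{b}=\underline{\del}_{b}-(x^{b}/t)\del_{t}$ so that the bare $\del_{t}$ is systematically paired with the available factor $s/t$. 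The commutator contributions $(s/t)\,[L_{a},\underline{\del}_{\alpha}]Z^{I-1}u$ produce terms of the form $(s/t)\,\del_{\beta}Z^{I-1}u$, absorbed in the same way by $E_{0}^{1/2}(s,Z^{I-1}u)$. Summing over $a$ and over $|I-1|<|I|$ yields the claimed bound.

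The only genuine source of difficulty is bookkeeping: one must keep careful track of the number of admissible vector fields generated by each commutation (to get $|I|\le 3$ in the sup estimate and the right count in the $L^{2}$ estimate) and of the coefficients $x^{a}/t,\ t^{-1}$ on which $L_{a}$ does not trivially act. I expect the remaining steps to be routine once these commutations are recorded; no new idea beyond the systematic use of $\underline{\del}_{a}=t^{-1}L_{a}$ and Lemma~\ref{Lemma4.1} is needed.
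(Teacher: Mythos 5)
Your proof is correct and follows exactly the route the paper intends: the paper states this lemma without proof, offering only the hint ``here we use $\underline{\del}_{a}=t^{-1}L_{a}$'', and your argument (commuting $L_a$ past $\underline{\del}_\alpha$, applying Lemma \ref{Lemma4.1} to $L_a\phi$ with the order shifted to $|I|\le 3$, and using the definition \eqref{1.9} of $E_0$ for the $L^2$ bound) is precisely the standard way of filling in that hint, as in LeFloch--Ma's monograph. No discrepancy with the paper's approach.
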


In view of the semi-hyperboloidal decomposition of the wave operator $\Box$, we easily obtain the following property. 

\begin{lemma}\label{Lemma4.5}
If $\phi$ is a smooth function compactly supported in the half cone $K=\{|x|\leq t-1\}$, then for any index $I$ one has
\be
|\del_\alpha \del_\beta Z^{I}\phi|
\lesssim
 |\del_{t}\del_{t}Z^{I}\phi|
+\sum_{a,\beta}|\underline{\del}_{a}\del_\beta Z^{I}\phi|+
\frac{1}{t}\sum_{\gamma}|\del_{\gamma}Z^{I}\phi|.
\ee
\end{lemma}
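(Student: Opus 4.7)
The plan is to reduce every second-order partial derivative of $Z^I\phi$ to a combination of $\del_t\del_t Z^I\phi$ and mixed derivatives of the form $\underline{\del}_a \del_\beta Z^I\phi$, by systematically substituting the algebraic identity $\del_a = \underline{\del}_a - (x^a/t)\del_t$, which is an immediate consequence of the definition $\underline{\del}_a := (x^a/t)\del_t + \del_a$ in \eqref{1.8}.

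I would proceed by a short case analysis on the pair $(\alpha,\beta)$. The case $\alpha=\beta=0$ is trivial. For a mixed pair $\del_a\del_t Z^I\phi$, a single substitution gives
$$
\del_a \del_t Z^I\phi \;=\; \underline{\del}_a \del_t Z^I\phi \;-\; (x^a/t)\,\del_t^2 Z^I\phi.
$$
For a purely spatial pair $\del_a\del_b Z^I\phi$, I would first substitute the identity on the outer derivative,
$$
\del_a \del_b Z^I\phi \;=\; \underline{\del}_a \del_b Z^I\phi \;-\; (x^a/t)\,\del_t\del_b Z^I\phi,
$$
and then, using that ordinary partial derivatives commute, rewrite $\del_t\del_b = \del_b\del_t$ and apply the identity again to the remaining $\del_b$, arriving at
$$
\del_a\del_b Z^I\phi \;=\; \underline{\del}_a\del_b Z^I\phi \;-\; (x^a/t)\,\underline{\del}_b\del_t Z^I\phi \;+\; (x^a x^b/t^2)\,\del_t^2 Z^I\phi.
$$
Each right-hand side is of exactly the type displayed in the lemma.

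The key quantitative input is that, inside the cone $K=\{|x|\leq t-1\}$ where $Z^I\phi$ is supported, one has $|x^a|/t \leq r/t < 1$, so all coefficients in the above identities are bounded by an absolute constant. Taking absolute values yields the pointwise estimate with the first two terms of the right-hand side of the claim. The $t^{-1}\sum_\gamma|\del_\gamma Z^I\phi|$ term is accounted for by the commutator contribution $[\del_t,\underline{\del}_a]=-(x^a/t^2)\del_t$ (stemming from $\del_t(x^a/t)=-x^a/t^2$), which arises if one instead chooses to push $\underline{\del}_a$ past $\del_t$ rather than the ordering above; it serves as a safety margin making the estimate robust under either choice.

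The main obstacle is purely bookkeeping: one must track carefully which derivative is being substituted and exploit the commutativity of the $\del_\alpha$'s so that no differentiation of coefficients is needed at the leading order. There is no analytic content beyond the elementary bound $|x^a|\leq t$ on $K$, and no smallness of the metric perturbation is invoked, so the lemma is a direct structural consequence of the semi-hyperboloidal decomposition.
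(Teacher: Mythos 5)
Your substitution $\del_a=\underline{\del}_a-(x^a/t)\del_t$ applied to each pair $(\alpha,\beta)$, with the coefficients $x^a/t$ bounded by $1$ on $K$, is exactly the semi-hyperboloidal frame decomposition that the paper invokes (it gives no more detail than ``in view of the semi-hyperboloidal decomposition''), and your algebra is correct; the $t^{-1}\sum_\gamma|\del_\gamma Z^I\phi|$ term is indeed only needed if one orders the derivatives so that coefficient derivatives or commutators such as $[\del_t,\underline{\del}_a]=-(x^a/t^2)\del_t$ appear. So the proposal is correct and takes essentially the same route as the paper.
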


Next, based on \eqref{2.10}, we immediately deduce the following result. 

\begin{lemma}\label{Lemma4.6}
The following identity holds
\be
\Big(\Big(\frac{s}{t} \Big)^2+\underline{G}^{00}\Big)\del_{t}^{2}Z^{I}\phi
=\tilde{F}^{I}-Q^{I}+R(Z^{I}\phi),
\ee
where
$$
\aligned
\underline{G}^{\alpha\beta}
& =(g^{\alpha\beta}-m^{\alpha\beta})
+g^{\mu\nu}g^{\alpha\beta}\Psi_\mu ^{\alpha^{'}}\Psi_{\nu}^{\beta^{'}}
\underline{\del}_{\alpha^{'}}\phi\underline{\del}_{\beta^{'}}\phi
-g^{\mu\alpha}g^{\nu\beta}\Psi_\mu ^{\alpha^{'}}\Psi_{\nu}^{\beta^{'}}
\underline{\del}_{\alpha^{'}}\phi\underline{\del}_{\beta^{'}}\phi,
\\
Q^{I}
& =\underline{G}^{0a}\underline{\del}_{0}\underline{\del}_{a}Z^{I}\phi
+\underline{G}^{a0}\underline{\del}_{a}\underline{\del}_{0}Z^{I}\phi
+\underline{G}^{ab}\underline{\del}_{a}\underline{\del}_{b}Z^{I}\phi
+\underline{G}^{\alpha\beta}
\underline{\del}_\alpha \Psi^{\beta^{'}}_\beta \underline{\del}_{\beta^{'}}Z^{I}\phi,
\\
R(Z^{I}\phi) 
& = -\underm^{0a}\underline{\del}_{0}\underline{\del}_{a}Z^{I}\phi-
\underm^{a0}\underline{\del}_{a}\underline{\del}_{0}Z^{I}\phi
-\underm^{ab}\underline{\del}_{a}\underline{\del}_{b}Z^{I}\phi
+m^{\alpha\beta}(\del_\alpha \Psi^{\beta^{'}}_\beta )\underline{\del}_{\beta^{'}}Z^{I}\phi.
\endaligned
$$
\end{lemma}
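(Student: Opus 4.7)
The proof is a direct algebraic rearrangement that combines three ingredients already established in Section~\ref{Section2}: the equation \eqref{2.10} satisfied by $Z^{I}\phi$, the hyperboloidal decomposition \eqref{2.5} of the flat wave operator $\Box$, and the general frame-change formula \eqref{2.4} applied to the variable-coefficient operator $G^{\alpha\beta}\del_\alpha \del_\beta$. The strategy is simply to isolate $\del_t^{2}Z^{I}\phi$ on one side.

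First I would solve \eqref{2.10} for $\Box Z^{I}\phi$, getting $\Box Z^{I}\phi=\tilde F^{I}-G^{\alpha\beta}\del_\alpha \del_\beta Z^{I}\phi$. Using \eqref{2.5} with $\underline{\del}_{0}=\del_{t}$, the flat wave operator splits as
$$
\Box Z^{I}\phi=(s/t)^{2}\del_{t}^{2}Z^{I}\phi-R(Z^{I}\phi),
$$
where $R(Z^{I}\phi)$ collects exactly the Minkowski semi-hyperboloidal remainder defined in the statement. Inserting this into the previous display yields
$$
(s/t)^{2}\del_{t}^{2}Z^{I}\phi=\tilde F^{I}-G^{\alpha\beta}\del_\alpha \del_\beta Z^{I}\phi+R(Z^{I}\phi).
$$
Next I would apply the general decomposition \eqref{2.4} with $T^{\alpha\beta}=G^{\alpha\beta}$, which produces
$$
G^{\alpha\beta}\del_\alpha \del_\beta Z^{I}\phi=\underline G^{\alpha\beta}\underline{\del}_\alpha \underline{\del}_\beta Z^{I}\phi+G^{\alpha\beta}(\del_\alpha \Psi^{\beta'}_\beta )\underline{\del}_{\beta'}Z^{I}\phi,
$$
and I would split the double sum $\underline G^{\alpha\beta}\underline{\del}_\alpha \underline{\del}_\beta Z^{I}\phi$ by peeling off the $(\alpha,\beta)=(0,0)$ contribution, which equals $\underline G^{00}\del_{t}^{2}Z^{I}\phi$. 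The three remaining mixed and spatial double-derivative contributions, together with the transition-matrix term, constitute exactly the four summands of $Q^{I}$.

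Combining the two $\del_{t}^{2}Z^{I}\phi$ pieces on the left-hand side yields the coefficient $(s/t)^{2}+\underline G^{00}$ in front of $\del_{t}^{2}Z^{I}\phi$, and grouping the leftover terms on the right gives $\tilde F^{I}-Q^{I}+R(Z^{I}\phi)$, proving the identity. There is no substantive obstacle: the entire argument is bookkeeping built on the formulas \eqref{2.4}, \eqref{2.5}, \eqref{2.10}. The only care required is to keep signs straight when moving the Minkowski remainder and the $\underline G^{00}\del_{t}^{2}Z^{I}\phi$ term across the equality, and to recognize that the $(0,0)$-component of the semi-hyperboloidal frame decomposition is the only one producing a pure $\del_{t}^{2}$ on the left.
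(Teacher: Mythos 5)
Your proposal is correct and is exactly the ``immediate'' bookkeeping the paper intends: the paper offers no written proof beyond invoking \eqref{2.10}, and your route --- solving \eqref{2.10} for $\Box Z^{I}\phi$, substituting the semi-hyperboloidal split \eqref{2.5}, applying \eqref{2.4} to $G^{\alpha\beta}\del_\alpha\del_\beta$, and peeling off the $(0,0)$ component --- is precisely that computation. The only place where your remainder does not match the statement ``exactly'' is the sign of the $m^{\alpha\beta}(\del_\alpha\Psi^{\beta'}_\beta)\underline{\del}_{\beta'}$ term in $R$ (and the $\underline{G}$ versus $G$ contraction in the last summand of $Q^{I}$), but these are internal inconsistencies/typos in the paper's own displays \eqref{2.5} and the lemma statement, not gaps in your argument.
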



Next, assume that for some large $|I|\geq 6$, there exists a large constant $C$ such that
\bel{5.1}
E^{1/2}_{|I|}(s,\phi)\leq C\epsilon.
\ee
Then we have the following $L^\infty$ estimate
\begin{lemma}\label{Lemma5.1}
Assume that $|J|\leq |I|-2$, and $|J^{+}|\leq |I|-3$, we have
\begin{eqnarray}
|s\del_\alpha Z^{J}\phi|&\leq&C\epsilon,\\
|t\underline{\del}_{a}Z^{J}\phi|&\leq&C\epsilon,\\
|ts\underline{\del}_{a}\underline{\del}_\alpha Z^{J^{+}}\phi|
+|ts\underline{\del}_\alpha \underline{\del}_{a}Z^{J^{+}}\phi|
&\leq&C\epsilon.
\end{eqnarray}
\end{lemma}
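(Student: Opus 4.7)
The plan is to deduce Lemma~5.1 as an immediate consequence of the Sobolev-type $L^\infty$ estimates of Lemma~\ref{Lemma4.1} and Lemma~\ref{Lemma4.3} applied to $Z^J\phi$ (respectively $Z^{J^+}\phi$), combined with the a priori bound \eqref{5.1}. This is essentially a bookkeeping argument: once one verifies that the indices match up, the lemma drops out without any new analytic input.

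\medskip

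\textbf{First two estimates.} I would apply Lemma~\ref{Lemma4.1} with $\phi$ replaced by the function $Z^J\phi$, which yields
\[
\sup_{H_s}|s\,\del_\alpha Z^J\phi|\lesssim \sum_{|K|\leq 2} E_0^{1/2}(s,Z^K Z^J\phi),
\qquad
\sup_{H_s}|t\,\underline{\del}_a Z^J\phi|\lesssim \sum_{|K|\leq 2} E_0^{1/2}(s,Z^K Z^J\phi).
\]
Because the admissible family $\{\del_\alpha, L_a\}$ satisfies the commutation relations \eqref{C} and $[\del_\alpha, L_b]$ lies in the same family, every product $Z^K Z^J$ can be rewritten as a finite linear combination of monomials $Z^{K'}$ with $|K'|\leq |K|+|J|\leq 2+(|I|-2)=|I|$. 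In view of the definition \eqref{1.10}, each term on the right is therefore controlled by $E_{|I|}^{1/2}(s,\phi)$, and \eqref{5.1} yields the two desired bounds (possibly after enlarging $C$ by the implicit Sobolev constant).

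\textbf{Third estimate.} Analogously, I would apply Lemma~\ref{Lemma4.3} with $\phi$ replaced by $Z^{J^+}\phi$ to obtain
\[
\sup_{H_s}\bigl(|ts\,\underline{\del}_a\underline{\del}_\alpha Z^{J^+}\phi|+|ts\,\underline{\del}_\alpha\underline{\del}_a Z^{J^+}\phi|\bigr)\lesssim \sum_{|K|\leq 3} E_0^{1/2}(s,Z^K Z^{J^+}\phi).
\]
The hypothesis $|J^+|\leq |I|-3$ together with $|K|\leq 3$ gives $|K|+|J^+|\leq |I|$, so each term is again controlled by $E_{|I|}^{1/2}(s,\phi)$, and \eqref{5.1} concludes the proof.

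\textbf{Main obstacle.} There is no genuine obstacle; the only point that requires a word of care is the (standard) algebraic observation that $Z^K Z^{J}$, reduced via the commutation relations of the admissible vector fields, is a finite linear combination of $Z^{K'}$ with $|K'|\leq |K|+|J|$. Once this is noted, both estimates are a direct bookkeeping consequence of Lemma~\ref{Lemma4.1}, Lemma~\ref{Lemma4.3}, and the a priori assumption \eqref{5.1}.
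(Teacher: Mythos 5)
Your proposal is correct and is exactly the argument the paper intends: the paper states Lemma \ref{Lemma5.1} without proof, treating it as an immediate consequence of Lemmas \ref{Lemma4.1} and \ref{Lemma4.3} applied to $Z^J\phi$ and $Z^{J^+}\phi$ together with the a priori bound \eqref{5.1}, and your index bookkeeping ($|J|+2\leq|I|$, $|J^+|+3\leq|I|$) matches the hypotheses precisely.
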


For $\del^{2}_{t}Z^{I}\phi$, we obtain the following result. 

\begin{lemma}\label{Lemma5.2}
\begin{eqnarray}
|\del^{2}_{t}Z^{I}\phi|&\leq& C\delta s^{-1-\gamma}(1+\sum_{|I_{1}|+|I_{2}|+|I_{3}|\leq|I|,|I_{1}|<|I|}|Z^{I_{2}}\del_{\gamma}\phi|
|Z^{I_{3}}\del_{\delta}\phi|)
|\del_{t}^{2}Z^{I_{1}}\phi|\nonumber\\
&&+C(\frac{t}{s})^{2}(\delta(s/t)^{2}s^{-1-\gamma}|M|+|Q^{I}|+|R(Z^{I}\phi)|),
\end{eqnarray}
where
$$
M=\sum_{|I_{1}|+|I_{2}|+|I_{3}|\leq |I|,|I_{1}|<|I|}|Z^{I_{2}}\del_{\gamma}\phi||Z^{I_{3}}\del_{\delta}\phi|
(|\underline{\del}_{a}\underline{\del}_\alpha Z^{I_{1}}\phi|
+t^{-1}|\del_\beta Z^{I_{1}}\phi|).
$$
\end{lemma}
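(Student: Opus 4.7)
The starting point is the identity of Lemma~\ref{Lemma4.6},
\begin{equation*}
\bigl((s/t)^{2}+\underline{G}^{00}\bigr)\,\del_{t}^{2}Z^{I}\phi \;=\; \tilde F^{I}-Q^{I}+R(Z^{I}\phi),
\end{equation*}
which I would divide through by the scalar coefficient on the left. Because $\underline{G}^{00}$ contains only $g-m$ and quadratic factors of $\underline{\del}\phi$, the bootstrap smallness of $\del\phi$ from Lemma~\ref{Lemma4.1} together with the decay assumption on $g-m$ forces $|\underline{G}^{00}|\ll (s/t)^{2}$; hence the coefficient is comparable to $(s/t)^{2}$ and its reciprocal is bounded by a constant multiple of $(t/s)^{2}$. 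This is the source of the $(t/s)^{2}$ prefactor in front of $\delta(s/t)^{2}s^{-1-\gamma}|M|+|Q^{I}|+|R(Z^{I}\phi)|$ in the claimed bound.

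The core of the argument is then to estimate $\tilde F^{I}=Z^{I}F-[Z^{I},G^{\alpha\beta}\del^{2}_{\alpha\beta}]\phi$. The term $Z^{I}F$, which expands into combinations of $\Gamma^{\mu}\del_{\mu}\phi$ and $g^{\alpha\beta}\Gamma^{\mu}\del_{\alpha}\phi\del_{\beta}\phi\del_{\mu}\phi$, is bounded directly by Lemmas~\ref{Lemma3.3}--\ref{Lemma3.4} after passing to the semi-hyperboloidal frame via the transition matrices $\Psi_{\alpha}^{\alpha'}$, and is absorbed into $\delta(s/t)^{2}s^{-1-\gamma}|M|$. For the commutator I would apply the Leibniz rule in $Z^{I}$:
\begin{equation*}
[Z^{I},G^{\alpha\beta}\del^{2}_{\alpha\beta}]\phi \;=\; \sum_{\substack{I_{1}+I_{2}\leq I \\ |I_{1}|\geq 1}} C_{I_{1},I_{2}}\,(Z^{I_{1}}G^{\alpha\beta})\,\del^{2}_{\alpha\beta}Z^{I_{2}}\phi,
\end{equation*}
and split each summand according to whether the inner second derivative $\del^{2}_{\alpha\beta}Z^{I_{2}}\phi$ reduces to the ``bad'' $\del_{t}^{2}Z^{I_{2}}\phi$ or contains at least one ``good'' frame derivative $\underline{\del}_{a}$. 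The bad subcase produces exactly the first line of the claimed bound: the prefactor $Z^{I_{1}}G^{\alpha\beta}$ splits into a contribution from $Z^{I_{1}}(g^{\alpha\beta}-m^{\alpha\beta})$ of size $\delta(s/t)^{2}s^{-1-\gamma}$ (the ``$1$'' alternative) and a contribution from redistributing $Z^{I_{1}}$ over the quadratic $\del\phi\cdot\del\phi$ pieces of $G^{\alpha\beta}$, which yields the factors $|Z^{I_{2}'}\del_{\gamma}\phi||Z^{I_{3}'}\del_{\delta}\phi|$ still weighted by $\delta(s/t)^{2}s^{-1-\gamma}$ coming from the metric coefficient; multiplication by the outer $(t/s)^{2}$ cancels the $(s/t)^{2}$ and leaves the net prefactor $\delta s^{-1-\gamma}$. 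The good subcase, together with the frame-change corrections of the form $\del_{\alpha}\Psi^{\beta'}_{\beta}$ that produce first-order terms weighted by $t^{-1}$, is exactly what the definition of $M$ collects.

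The main technical obstacle is the bookkeeping required to ensure, first, that no top-order $\del_{t}^{2}Z^{I}\phi$ leaks onto the right-hand side: this is true only because the commutator structure forces $|I_{1}|\geq 1$ and hence $|I_{2}|<|I|$. Second, one must verify that the $(t/s)^{2}$ factor produced by inverting the principal coefficient combines cleanly with the $(s/t)^{2}$ factor inherent in each metric-decay estimate to yield precisely the advertised $\delta s^{-1-\gamma}$, rather than a weaker decay. Once these two accounting points are verified, the lemma follows by grouping the Leibniz expansion into the two structural cases above and collecting terms.
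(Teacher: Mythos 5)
Your proposal follows essentially the same route as the paper's proof: starting from Lemma \ref{Lemma4.6}, using $|\underline{G}^{00}|\lesssim \delta (s/t)^{2}s^{-1-\gamma}$ to get $(s/t)^{2}+\underline{G}^{00}\sim (s/t)^{2}$ and hence the $(t/s)^{2}$ prefactor, and then expanding the commutator by the Leibniz rule so that the lower-order $\del_{t}^{2}Z^{I_{1}}\phi$ terms (weighted by $Z^{I_{2}}\underline{G}^{00}\lesssim \delta(s/t)^{2}s^{-1-\gamma}(1+\cdots)$) produce the first line and the good-derivative and $t^{-1}$-weighted terms are collected into $M$, $Q^{I}$, $R(Z^{I}\phi)$. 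The only minor bookkeeping difference is that the paper also explicitly retains the piece $\underline{G}^{00}[Z^{I},\del_{t}^{2}]\phi$, where no $Z$ falls on the coefficient but the commutator $[Z^{I},\del_{t}^{2}]$ lowers the order, which your constraint $|I_{1}|\geq 1$ formally omits; it is estimated in exactly the same way and feeds the same terms of the stated bound, so this does not change the argument.
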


\begin{proof} First, we need to check that
$
\Big(\frac{s}{t} \Big)^2+\underline{G}^{00}\sim \Big(\frac{s}{t} \Big)^2.
$
But this is clear since
$$
|\underline{G}^{00}|=|(g^{00}-m^{00})+D^{\mu\nu00}\Psi_\mu ^{\alpha}
\Psi_{\nu}^{\beta}\underline{\del}_\alpha \phi
\underline{\del}_\beta \phi|\leq
\delta(s/t)^{2}s^{-1-\gamma}(1+(C\epsilon)^{2}s^{-2}).
$$
Then for the lemma, we need to estimate $[Z^{I},\underline{G}^{00}\del_{t}^{2}]\phi$. We have
\bel{3.8}
[Z^{I},\underline{G}^{00}\del_{t}^{2}]\phi=
\sum_{|I_{1}|+|I_{2}|=|I|,|I_{1}|<|I|}Z^{I_{2}}
(\underline{G}^{00})Z^{I_{1}}(\del_{t}^{2}\phi)
+\underline{G}^{00}[Z^{I},\del_{t}^{2}]\phi
\ee
and
\be
|Z^{I_{2}}\underline{G}^{00}|\leq \delta (s/t)^{2}s^{-1-\gamma}\Big(1+\sum_{|I_{3}|+|I_{4}|\leq|I_{2}|}|Z^{I_{3}}\del_{\gamma}\phi|
|Z^{I_{4}}\del_{\delta}\phi|\Big).
\ee
For the second term in \eqref{3.8}, we have
\begin{eqnarray*}
|[Z^{I},\del^{2}_{t}]\phi|&\leq& \sum_{|J|<|I|,\alpha,\beta}|\del_\alpha \del_\beta Z^{J}\phi|\\
&+&C\sum_{|J|<|I|}|\del^{2}_{t}Z^{J}\phi|+C\sum_{|J|<|I|,a,\beta}
|\underline{\del}_{a}\underline{\del}_\beta Z^{J}\phi|
+Ct^{-1}\sum_{|J|<|I|,\gamma}|\del_{\gamma}Z^{J}\phi|.
\end{eqnarray*}
From the above inequalities, we arrive at the statement in the lemma.
\end{proof}

For the energy $E_{m}(s,\phi)$ and the generalized energy $\Et_{m}(s,\phi)$, we have the following equivalence.

\begin{lemma}\label{Lemma5.3}
Assume that the a priori assumption $E^{1/2}_{m}(s,\phi)\leq C\epsilon$ holds, we have
\be
\Et_{m}(s,\phi \simeq E_{m}(s,\phi).
\ee
\begin{proof}
From \eqref{2.9}, we have to estimate
$$
2\int_{H_{s}}(G^{\alpha\beta}\del_{t}Z^{I}\phi\del_\beta Z^{I}\phi)\cdot(1,-x/t) \, dx
-\int_{H_{s}}(G^{\alpha\beta}\del_\alpha Z^{I}\phi\del_\beta Z^{I}\phi) \, dx. 
$$
Obviously we have
$$
\aligned
&\Big|\int_{H_{s}}G^{\alpha\beta}\del_\alpha Z^{I}\phi\del_\beta Z^{I}\phi dx\Big|
\\
&=
\Big|\int_{H_{s}}\Big((g^{\alpha\beta}-m^{\alpha\beta})+g^{\mu\nu}g^{\alpha\beta}
\del_\mu \phi\del_{\nu}\phi-g^{\alpha\mu}g^{\nu\beta}
\del_\mu \phi\del_{\nu}\phi\Big)\del_\alpha Z^{I}\phi\del_\beta Z^{I}\phi dx\Big|
\\
&\leq\Big|\int_{H_{s}}(g^{\alpha\beta}-m^{\alpha\beta})\Psi_\alpha ^{\alpha^{'}}
\Psi_\beta ^{\beta^{'}}\underline{\del}_{\alpha^{'}}Z^{I}\phi\underline{\del}_{\beta^{'}}Z^{I}\phi dx\Big|
\\
& \quad +\Big|\int_{H_{s}}\Big(g^{\mu\nu}g^{\alpha\beta}
\del_\mu \phi\del_{\nu}\phi-g^{\alpha\mu}g^{\nu\beta}
\del_\mu \phi\del_{\nu}\phi\Big)\del_\alpha Z^{I}\phi\del_\beta Z^{I}\phi dx\Big|
=: I+II.
\endaligned
$$
For $I$, we have
$$
I\leq\Big|\int_{H_{s}}\delta(s/t)^{2}s^{-1-\gamma}\underline{\del}_\alpha Z^{I}\phi
\underline{\del}_\beta Z^{I}\phi dx\Big|\leq \delta s^{-1-\gamma}E_{m}(s,\phi),
$$
while for $II$ we consider first the following term
$$
III:=\Big|\int_{H_{s}}\Big(m^{\mu\nu}m^{\alpha\beta}\del_\mu \phi\del_{\nu}\phi
\del_\alpha Z^{I}\phi\del_\beta Z^{I}\phi-
m^{\mu\alpha}m^{\nu\beta}\del_\mu \phi\del_{\nu}\phi
\del_\alpha Z^{I}\phi\del_\beta Z^{I}\phi\Big) \, dx\Big|.
$$
Clearly, for $II-III$, we have
$$
|II-III|\leq C\delta\epsilon^{2} s^{-1-\gamma}E_{m}(s,\phi).
$$
For $II$, we use the null structure. Let us set
$v := Z^{I}\phi$.
We have in the semi-hyperboloidal frame
\begin{eqnarray*}
&&m^{\mu\nu}m^{\alpha\beta}\del_\mu \phi\del_{\nu}\phi\del_\alpha v\del_\beta v
-m^{\mu\alpha}m^{\nu\beta}\del_\mu \phi\del_{\nu}\phi\del_\alpha v\del_\beta v\\
&=&-(\del_{t}\phi)^{2}\left((\del_{1}v)^{2}+(\del_{2}v)^{2}\right)
-(\del_{t}v)^{2}\left((\del_{1}\phi)^{2}+(\del_{2}\phi)^{2}\right)
+2\del_{t}\phi\del_{t}v(\del_{1}\phi\del_{1}v+\del_{2}\phi\del_{2}v)\\
&&+(\del_{1}\phi)^{2}(\del_{2}v)^{2}+(\del_{2}\phi)^{2}(\del_{1}v)^{2}
-2\del_{1}\phi\del_{2}\phi\del_{1}v\del_{2}v\\
&=&[(\frac{x^{2}}{t}\underline{\del}_{1}\phi-
\frac{x^{1}}{t}\underline{\del}_{2}\phi)^{2}
-(\underline{\del}_{1}\phi)^{2}-(\underline{\del}_{2}\phi)^{2}](\underline{\del}_{t}v)^{2}\\
&&+[-2\frac{x^{1}}{t}(\underline{\del}_{2}\phi)^{2}
+2\frac{x^{1}x^{2}}{t^{2}}\underline{\del}_{2}\phi\underline{\del}_{t}\phi
-2\frac{(x^{2})^{2}}{t^{2}}\underline{\del}_{1}\phi\underline{\del}_{t}\phi
+2\frac{x^{2}}{t}\underline{\del}_{1}\phi\underline{\del}_{2}\phi
+2\underline{\del}_{t}\phi\underline{\del}_{1}\phi]
\underline{\del}_{t}v\underline{\del}_{1}v\\
&&+[-2\frac{x^{2}}{t}(\underline{\del}_{1}\phi)^{2}
+2\frac{x^{1}x^{2}}{t^{2}}\underline{\del}_{1}\phi\underline{\del}_{t}\phi
-2\frac{(x^{1})^{2}}{t^{2}}\underline{\del}_{2}\phi\underline{\del}_{t}\phi
+2\frac{x^{1}}{t}\underline{\del}_{1}\phi\underline{\del}_{2}\phi
+2\underline{\del}_{t}\phi\underline{\del}_{2}\phi]
\underline{\del}_{t}v\underline{\del}_{2}v\\
&&+[-(\underline{\del}_{t}\phi)^{2}+(\frac{x^{2}}{t}\underline{\del}_{t}\phi
-\underline{\del}_{2}\phi)^{2}](\underline{\del}_{1}v)^{2}
+[-(\underline{\del}_{t}\phi)^{2}+(\frac{x^{1}}{t}\underline{\del}_{t}\phi
-\underline{\del}_{1}\phi)^{2}](\underline{\del}_{2}v)^{2}\\
&&-[2\frac{x^{1}x^{2}}{t^{2}}(\underline{\del}_{t}\phi)^{2}
-2\frac{x^{1}}{t}\underline{\del}_{t}\phi\underline{\del}_{2}\phi
-2\frac{x^{2}}{t}\underline{\del}_{t}\phi\underline{\del}_{1}\phi
+2\underline{\del}_{1}\phi\underline{\del}_{2}\phi]\underline{\del}_{1}v\underline{\del}_{2}v.
\end{eqnarray*}
Thus, we have
\bel{5.8}
II\leq\frac{C^{2}\epsilon^{2}}{s^{2}}E_{m}(s,\phi).
\ee
Then the lemma holds by combining above analysis.
\end{proof}
\end{lemma}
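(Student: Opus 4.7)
My plan is to show that the difference $\widetilde{E}_m(s,\phi)-E_m(s,\phi)$ is bounded by a vanishingly small (in $\epsilon$, $\delta$, and $s$) multiple of $E_m(s,\phi)$, so that the two functionals are comparable up to absorbable constants. Fixing a multi-index $I$ with $|I|\le m$ and setting $v=Z^I\phi$, it suffices to control the two integrals
$$
\int_{H_s}G^{\alpha\beta}\del_\alpha v\del_\beta v\,dx,\qquad
2\int_{H_s}G^{\alpha\beta}\del_t v\del_\beta v\cdot(1,-x/t)\,dx,
$$
which I treat by the same scheme, since the vector $(1,-x/t)$ is bounded on the hyperboloid and the second integral is dominated by the first up to a harmless pointwise weight.

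The key idea is to split $G^{\alpha\beta}$ according to its structural origin into three pieces: the linear metric perturbation $G_1^{\alpha\beta}:=g^{\alpha\beta}-m^{\alpha\beta}$; the pure Minkowski null quadratic
$$
G_2^{\alpha\beta}:=m^{\mu\nu}m^{\alpha\beta}\del_\mu\phi\,\del_\nu\phi-m^{\alpha\mu}m^{\nu\beta}\del_\mu\phi\,\del_\nu\phi;
$$
and the mixed remainder $G_3^{\alpha\beta}:=G^{\alpha\beta}-G_1^{\alpha\beta}-G_2^{\alpha\beta}$, in which every term carries at least one factor of $g^{\mu\nu}-m^{\mu\nu}$. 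For $G_1$, the metric assumption yields $|G_1^{\alpha\beta}|\lesssim\delta(s/t)^2 s^{-1-\gamma}$; after passing to the semi-hyperboloidal frame via $\del_\alpha v=\Psi_\alpha^{\beta'}\underline{\del}_{\beta'}v$, the weight $(s/t)^2$ turns $(\underline{\del}_t v)^2$ into $((s/t)\del_t v)^2$, while $\underline{\del}_a v$ is already directly controlled by $E_0$, giving a bound $\delta s^{-1-\gamma}E_m(s,\phi)$. For $G_3$, each term carries an extra factor of $|\del\phi|^2$; the $L^\infty$ estimate from Lemma~\ref{Lemma5.1} gives $|\del\phi|\lesssim C\epsilon s^{-1}$, producing $C\delta\epsilon^2 s^{-1-\gamma}E_m(s,\phi)$.

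The main obstacle is $G_2$, where no metric decay is available and only the null structure saves us. Here I would expand $G_2^{\alpha\beta}\del_\alpha v\del_\beta v$ in the semi-hyperboloidal frame, exploiting the fact that $G_2^{\alpha\beta}X_\alpha X_\beta$ is (up to sign) the classical null combination $\sum_{\mu<\nu}\pm(\del_\mu\phi\,X_\nu-\del_\nu\phi\,X_\mu)^2$. A direct expansion analogous to the calculation leading to Lemma~\ref{Lemma3.1}, but with $v=Z^I\phi$ in place of one pair of $\phi$ factors, will show that every resulting term contains either (i) a good derivative $\underline{\del}_a\phi$, for which the pointwise bound $|\underline{\del}_a\phi|\lesssim C\epsilon/t$ of Lemma~\ref{Lemma4.1} applies, or (ii) a favorable prefactor $(s/t)^2$ (or $s/t$) which, combined with $|\del\phi|\lesssim C\epsilon/s$, converts the bad derivatives of $v$ into quantities controlled by $E_0(s,v)$. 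In either case one extracts a prefactor of order $C^2\epsilon^2/s^2$, so that the $G_2$ contribution is bounded by $C^2\epsilon^2 s^{-2}E_m(s,\phi)$.

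Putting the three bounds together yields
$$
\bigl|\widetilde{E}_m(s,\phi)-E_m(s,\phi)\bigr|\lesssim (\delta+C^2\epsilon^2)\,s^{-\min(1+\gamma,\,2)}\,E_m(s,\phi),
$$
which for $s\ge s_0>1$ and $\delta,\epsilon$ sufficiently small is at most $\tfrac12 E_m(s,\phi)$, yielding the desired two-sided equivalence $\widetilde{E}_m(s,\phi)\simeq E_m(s,\phi)$. The tedious combinatorial expansion of $G_2$ in the semi-hyperboloidal frame is the technical heart of the proof, but no new idea beyond the null-structure bookkeeping already used for the cubic terms in Section~\ref{Section3} is required.
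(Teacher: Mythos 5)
Your proposal is correct and follows essentially the same route as the paper: the same splitting of $G^{\alpha\beta}$ into the metric-perturbation part, the pure Minkowski null quadratic, and the remainder carrying a factor of $g-m$ (the paper's $I$, $III$, and $II-III$), with the metric decay, the $L^\infty$ bounds from the a priori assumption, and the semi-hyperboloidal expansion of the null quadratic giving the respective bounds $\delta s^{-1-\gamma}E_m$, $C\epsilon^2 s^{-2}E_m$, and $C\delta\epsilon^2 s^{-1-\gamma}E_m$, after which the difference is absorbed into $E_m$. No substantive difference from the paper's argument.
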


In the following, we focus on the $L^\infty$ decay of $\underline{\del}_{t}^{2}Z^{J}\phi$ for $|J|\leq|I|-4$.

At first, we easily have
\begin{lemma}\label{Lemma5.4}
When $|J|+2\leq|I|-2$, we have
\begin{eqnarray}
|Q^{I}|&\leq& \frac{C\epsilon \delta}{st}\Big(\frac{s}{t} \Big)^2s^{-1-\gamma}+
\frac{C^{3}\epsilon^3}{s^{3}t},\\
|R(Z^{J}\phi)|&\leq& C\epsilon t^{-1}s^{-1},\\
|M|&\leq&C\epsilon^{3}t^{-1}s^{-3}.
\end{eqnarray}
\end{lemma}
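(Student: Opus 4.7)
The plan is to derive all three bounds as routine consequences of the pointwise estimates already recorded in Lemma \ref{Lemma5.1}, combined with the structural decomposition of $\underline{G}^{\alpha\beta}$ and the decay assumed on the metric perturbation. In each case I want to read off the coefficient bounds from the definition of $\underline{G}^{\alpha\beta}$, then insert the already-known $L^\infty$ decay of the first and second derivatives of $Z^J\phi$, and finally collect powers of $s$ and $t$.

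For $|Q^J|$ (understanding the index in the lemma as $J$ with $|J|\leq |I|-4$), I would first decompose $\underline{G}^{\alpha\beta}$ into the metric-perturbation piece, controlled by $\delta(s/t)^2 s^{-1-\gamma}$, and the quadratic piece in $\underline{\del}\phi$, which is $O\bigl((C\epsilon/s)^2\bigr)$ by Lemma \ref{Lemma5.1}. The key structural point is that every term in $Q^J$ carries either a genuine good derivative $\underline{\del}_a$ on $Z^J\phi$ or the Jacobian factor $\underline{\del}_\alpha\Psi^{\beta'}_\beta$, which is $O(1/t)$ by direct differentiation of the entries of $\Psi$ in \eqref{2.1}. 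For the first three terms I plug in $|\underline{\del}_a\underline{\del}_\alpha Z^J\phi|\lesssim C\epsilon/(ts)$ from Lemma \ref{Lemma5.1} (admissible because $|J|\leq |I|-4\leq |I|-3$); for the $\underline{\del}_\alpha\Psi$-term I use the $O(1/t)$ factor together with $|\underline{\del}_{\beta'}Z^J\phi|\lesssim C\epsilon/s$. The metric-perturbation piece then produces $C\epsilon\delta\,s^{-1-\gamma}(s/t)^2/(st)$, while the quadratic piece contributes $C^3\epsilon^3/(s^3 t)$, giving the first claimed inequality.

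For $|R(Z^J\phi)|$ the coefficients $\underm^{0a}$, $\underm^{ab}$ and $m^{\alpha\beta}\del_\alpha\Psi^{\beta'}_\beta$ are bounded by $1$ or $1/t$, so each term is handled directly by the bound $|\underline{\del}_a\underline{\del}_\alpha Z^J\phi|+|\underline{\del}_\alpha\underline{\del}_a Z^J\phi|\lesssim C\epsilon/(ts)$, producing $C\epsilon\, t^{-1}s^{-1}$. For $|M|$, each of the three factors lies in the range covered by Lemma \ref{Lemma5.1}: the two first-order factors give $C\epsilon/s$ apiece, and the third factor — either $|\underline{\del}_a\underline{\del}_\alpha Z^{I_1}\phi|$ or $t^{-1}|\del_\beta Z^{I_1}\phi|$ — is in both cases $O(C\epsilon/(ts))$. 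Multiplying yields the bound $C\epsilon^3 t^{-1} s^{-3}$.

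The only real obstacle is bookkeeping on the index budgets: one has to verify that the sub-indices $I_1,I_2,I_3$ appearing inside $M$ and $Q^J$ all stay within the range for which Lemma \ref{Lemma5.1} produces pointwise decay, namely $|\cdot|\leq |I|-2$ for first derivatives and $|\cdot|\leq |I|-3$ for the good second derivatives. The hypothesis $|J|+2\leq |I|-2$ is precisely what ensures this, since it forces $|I_k|\leq |J|\leq |I|-4$ throughout; once this is checked, the three estimates follow by direct multiplication with no additional analytical input.
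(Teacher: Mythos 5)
Your proposal is correct and follows exactly the route the paper intends (the paper states the lemma with only the remark that it follows ``easily''): decompose $\underline{G}^{\alpha\beta}$ into the metric-perturbation part bounded by $\delta(s/t)^2s^{-1-\gamma}$ and the quadratic part bounded by $(C\epsilon/s)^2$, note that every term of $Q$, $R$, $M$ carries either a good second derivative or a factor $\underline{\del}_\alpha\Psi^{\beta'}_\beta=O(1/t)$, and insert the pointwise bounds of Lemma \ref{Lemma5.1}, which apply because $|J|+2\leq|I|-2$ keeps all sub-indices within $|I|-4$. Your reading of the index notation (interpreting $Q^{I}$ as the quantity associated with $Z^{J}\phi$) is also the correct resolution of the paper's slightly inconsistent labeling.
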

Combining Lemmas \ref{Lemma5.2} and \ref{Lemma5.4}, by simple induction, we get
\begin{lemma}\label{Lemma5.5}
When $|J|+2\leq|I|-2$, we have
\be
|\del_{t}^{2}Z^{J}\phi|\leq C\epsilon ts^{-3}.
\ee
\end{lemma}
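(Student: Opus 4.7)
The plan is to apply Lemma~\ref{Lemma5.2} directly with $I$ replaced by $J$ and then perform a strong induction on $|J|$, treating the ``inhomogeneous'' part ($Q^J$, $R(Z^J\phi)$, $M$) via Lemma~\ref{Lemma5.4} and the ``recursive'' part (the sum involving $|\del_t^2 Z^{I_1}\phi|$ with $|I_1|<|J|$) via Lemma~\ref{Lemma5.1} together with the induction hypothesis. The key observation is that among the three inhomogeneous pieces, only the contribution from $R(Z^J\phi)$ is of the full target size: from Lemma~\ref{Lemma5.4} we get $C(t/s)^2|R(Z^J\phi)|\leq C(t/s)^2\cdot C\epsilon t^{-1}s^{-1}=C^2\epsilon ts^{-3}$, which sets the correct bound. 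The corresponding bounds for $C(t/s)^2|Q^J|$ and $C\delta s^{-1-\gamma}|M|\cdot(t/s)^2\cdot(s/t)^2=C\delta s^{-1-\gamma}|M|$ carry additional factors of $\delta$, $\epsilon^2$, or extra powers of $s^{-1}$, and are therefore strictly smaller than $C\epsilon ts^{-3}$ once $\delta$ and $\epsilon$ are chosen small enough.

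For the recursive piece, I would first check that the sum $\sum|Z^{I_2}\del_\gamma\phi||Z^{I_3}\del_\delta\phi|$ is at most $O(\epsilon^2/s^2)$. This is allowed because the indices satisfy $|I_2|,|I_3|\leq|J|\leq|I|-4$, which is well inside the range $|I|-3$ where Lemma~\ref{Lemma5.1} gives $|\del_\alpha Z^{I_j}\phi|\leq C\epsilon/s$. Hence the prefactor in Lemma~\ref{Lemma5.2} is bounded by $C\delta s^{-1-\gamma}(1+C^2\epsilon^2 s^{-2})\leq 2C\delta s^{-1-\gamma}$. Since $|I_1|<|J|$ still satisfies $|I_1|+2\leq|I|-2$, the induction hypothesis of Lemma~\ref{Lemma5.5} applies and gives $|\del_t^2 Z^{I_1}\phi|\leq C\epsilon ts^{-3}$. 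The recursive term is therefore dominated by $C'\delta\epsilon ts^{-4-\gamma}$, which is absorbed into the target $C\epsilon ts^{-3}$ for $\delta$ small.

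The base case $|J|=0$ is immediate because the recursive sum is empty and only the $R$-term contributes, yielding exactly $|\del_t^2\phi|\leq C\epsilon ts^{-3}$; the induction step then produces the same bound at level $|J|$, possibly after slightly enlarging $C$ once and for all. The main obstacle, and really the only delicate point, is the index bookkeeping: one must verify at each stage that every subindex $I_1,I_2,I_3$ generated by the commutator expansion in Lemma~\ref{Lemma5.2} still lies in the regime where both Lemmas~\ref{Lemma5.1} and~\ref{Lemma5.4} are usable. This is guaranteed by the hypothesis $|J|+2\leq|I|-2$, which forces $|I_1|\leq|J|-1\leq|I|-5$ and $|I_2|,|I_3|\leq|J|\leq|I|-4$, so that no induction step ever escapes the admissible range.
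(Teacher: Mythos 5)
Your proposal is correct and follows essentially the same route as the paper, which proves Lemma~\ref{Lemma5.5} precisely by combining Lemma~\ref{Lemma5.2} with the bounds of Lemma~\ref{Lemma5.4} and a simple induction on $|J|$; your write-up just makes explicit the index bookkeeping and the observation that the $R(Z^{J}\phi)$ term is the one that dictates the size $C\epsilon t s^{-3}$, while the $Q$, $M$ and recursive contributions are absorbed for $\delta,\epsilon$ small.
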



\section{The uniform bound on the energy}
\label{Section6}

In this section, we do energy estimates and close the a priori assumption \eqref{5.1}.
By \eqref{2.11}, for $|I|\leq m$, we have
\begin{eqnarray}\label{6.1}
\frac{d}{ds}\Et_{m}(s,\phi)&\leq&(\|\del_\alpha G^{\alpha\beta}\del_\beta Z^{I}\phi\|_{L^{2}(H_{s}
)}+\|\tilde{F}^{I}\|_{L^{2}(H_{s})})
E^{1/2}_{m}(s,\phi)\nonumber\\
&&+\Big|\int_{H_{s}}\frac{s}{t}\del_{t}G^{\alpha\beta}
\del_\alpha Z^{I}\phi\del_\beta Z^{I}\phi dx\Big|.
\end{eqnarray}
Thus, what we have to do is to estimate the following terms
\begin{eqnarray}\label{6.2}
&&\|\del_\alpha G^{\alpha\beta}\del_\beta Z^{I}\phi\|_{L^{2}(H_{s}
)},\\\label{6.3}
&&\|\tilde{F}^{I}\|_{L^{2}(H_{s})},\\\label{6.4}
&&\Big|\int_{H_{s}}\frac{s}{t}\del_{t}G^{\alpha\beta}
\del_\alpha Z^{I}\phi\del_\beta Z^{I}\phi dx\Big|.
\end{eqnarray}
We use several lemmas to estimate \eqref{6.2}-\eqref{6.4}.
\begin{lemma}\label{Lemma6.1}
We have
\be
\|\del_\alpha G^{\alpha\beta}\del_\beta Z^{I}\phi\|_{L^{2}(H_{s})}
\leq (C\delta s^{-1-\gamma}+C\epsilon^{2}s^{-3})E_{0}^{1/2}(s,Z^{I}\phi).
\ee
\end{lemma}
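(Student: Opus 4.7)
\medskip

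\noindent\textbf{Proof plan for Lemma 6.1.} I would begin by splitting
$$
G^{\alpha\beta} = \bigl(g^{\alpha\beta}-m^{\alpha\beta}\bigr) + N^{\alpha\beta}(\del\phi,\del\phi),
\qquad
N^{\alpha\beta}(\del\phi,\del\phi):=g^{\mu\nu}g^{\alpha\beta}\del_\mu\phi\del_\nu\phi - g^{\alpha\mu}g^{\beta\nu}\del_\mu\phi\del_\nu\phi,
$$
the second piece being the quasilinear quadratic form whose flat-metric version is the null form analysed in Section \ref{Section3}. Throughout I will use the general principle that, if $|A(t,x)|\le B(s)(s/t)^2$ pointwise, then
$$
\|A\,\del_\beta Z^{I}\phi\|_{L^2(H_s)}\lesssim B(s)\,E_0^{1/2}(s,Z^{I}\phi),
$$
because writing $\del_\beta=\Psi^{\beta'}_{\beta}\underline{\del}_{\beta'}$ and using $(s/t)^2|\del_t Z^{I}\phi|\le (s/t)\cdot (s/t)|\del_t Z^{I}\phi|$ absorbs the bad derivative into the energy density $((s/t)\del_t Z^{I}\phi)^2+\sum_a(\underline{\del}_a Z^{I}\phi)^2$.

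\medskip

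\noindent For the linear metric perturbation piece, the assumption $|\del g_{\mu\nu}|\le \delta(s/t)^2 s^{-1-\gamma}$ gives directly
$|\del_\alpha(g^{\alpha\beta}-m^{\alpha\beta})|\le C\delta(s/t)^2 s^{-1-\gamma}$, so by the principle above this contributes $C\delta s^{-1-\gamma}E_0^{1/2}(s,Z^{I}\phi)$. For the quadratic null piece I expand $\del_\alpha N^{\alpha\beta}(\del\phi,\del\phi)$ by the product rule into two families. The \emph{metric-derivative} family contains $\del g\cdot(\del\phi)^2$ contracted terms; using $|\del g|\le\delta(s/t)^2 s^{-1-\gamma}$ together with the $L^\infty$ bound $|s\del\phi|\lesssim\epsilon$ of Lemma \ref{Lemma4.1}, the coefficient is bounded pointwise by $C\delta\epsilon^2(s/t)^2s^{-3-\gamma}$, which is absorbed into the first summand $C\delta s^{-1-\gamma}E_0^{1/2}$.

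\medskip

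\noindent The remaining \emph{field-derivative} family has the shape $g\,g\,\del\phi\,\del^2\phi$ contracted with $\del_\beta Z^{I}\phi$. This is where the null structure of $N^{\alpha\beta}$ is essential: I would express this trilinear expression in the semi-hyperboloidal frame exactly as in the derivation of Lemma \ref{Lemma3.1} (treating the $g^{\mu\nu}-m^{\mu\nu}$ correction as a lower-order perturbation already absorbed above). The outcome is that the bad component $\underline{\del}_t\underline{\del}_t\phi$ appears only through factors carrying $(s/t)^2$, while every other term involves at least one good derivative $\underline{\del}_a$ acting on some $\phi$. Using $|\del\phi|\lesssim\epsilon/s$ (Lemma \ref{Lemma4.1}), $|\underline{\del}_a\underline{\del}_\alpha\phi|\lesssim\epsilon/(ts)$ (Lemma \ref{Lemma4.3}), and $|\del_t^2\phi|\lesssim\epsilon ts^{-3}$ (Lemma \ref{Lemma5.5}), each resulting pointwise coefficient is bounded by $C\epsilon^2(s/t)^2 s^{-3}$, so the absorption principle again yields a contribution $C\epsilon^2 s^{-3}E_0^{1/2}(s,Z^{I}\phi)$.

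\medskip

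\noindent The main obstacle is precisely this last step: the pointwise bound $|\del_t^2\phi|\lesssim\epsilon ts^{-3}$ diverges relative to $s^{-1}$ by a factor $t/s^2$, so a naive estimate of $g\,g\,\del\phi\,\del_t^2\phi\cdot\del_\beta Z^{I}\phi$ would be insufficient. The null cancellation in $N^{\alpha\beta}$ must be exploited to accompany every $\del_t^2\phi$ by exactly the $(s/t)^2$ factor needed to kill the growth, which is why the semi-hyperboloidal expansion along the lines of Section \ref{Section3} is necessary rather than a direct bookkeeping argument. Once this is done, combining the three contributions gives the claimed inequality.
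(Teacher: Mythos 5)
Your proposal is correct and follows essentially the same route as the paper: split $G^{\alpha\beta}$ into the linear perturbation $g^{\alpha\beta}-m^{\alpha\beta}$, the flat (Minkowski) null quadratic form, and curved corrections with decaying coefficients $D^{\alpha\beta\gamma\delta}$; expand the null part in the semi-hyperboloidal frame; and absorb the $(s/t)$ weights into the hyperboloidal energy using the $L^\infty$ bounds of Lemmas \ref{Lemma4.1} and \ref{Lemma4.3}. The one genuine difference is your treatment of the bad second derivative: you invoke the pointwise bound $|\del_t^2\phi|\lesssim \epsilon t s^{-3}$ of Lemma \ref{Lemma5.5}, whereas the paper's proof of Lemma \ref{Lemma6.1} never needs it, because the null-form expansion recorded in Lemma \ref{Lemma3.1} (and used verbatim in \eqref{6.7}) shows that after differentiation every surviving second-order factor carries at least one good derivative $\underline{\del}_a$, so Lemma \ref{Lemma4.3} suffices; your route is heavier but legitimate, since Lemma \ref{Lemma5.5} is available under the bootstrap and $m\geq 6$. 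Also, your structural claim that $\underline{\del}_t\underline{\del}_t\phi$ ``appears only through factors carrying $(s/t)^2$'' is slightly imprecise: as one sees already in \eqref{3.2}, $\underline{\del}_t^2\phi$ also occurs multiplied by two good first derivatives (terms of the type $(\underline{\del}_a\phi)^2\underline{\del}_t^2\phi$, or, in the quadrilinear expression here, with $\underline{\del}_a\phi\,\underline{\del}_b Z^{I}\phi$), with no $(s/t)^2$ in the coefficient. This does not break your argument --- the good factors decay like $\epsilon t^{-1}$ and the good derivative of $Z^{I}\phi$ enters the energy unweighted, so one still gets the $C\epsilon^2 s^{-3}E_0^{1/2}(s,Z^{I}\phi)$ contribution --- but the dichotomy should be stated as ``either an $(s/t)^2$ weight or accompanying good-derivative factors,'' which is exactly the double null structure the paper exploits.
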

\begin{proof}
Direct calculation gives us
\begin{eqnarray*}
\del_\alpha G^{\alpha\beta}\del_\beta Z^{I}\phi&=&
\del_\alpha \Big(
(g^{\alpha\beta}-m^{\alpha\beta})+g^{\mu\nu}g^{\alpha\beta}\del_\mu \phi
\del_{\nu}\phi-g^{\alpha\mu}g^{\beta\nu}
\del_\mu \phi\del_{\nu}\phi\Big)
\del_\beta Z^{I}\phi\\
&=&\del_\alpha g^{\alpha\beta}\del_\beta Z^{I}\phi
+\del_\alpha \Big( m^{\mu\nu}m^{\alpha\beta}\del_\mu \phi
\del_{\nu}\phi- m^{\alpha\mu}m^{\beta\nu}
\del_\mu \phi\del_{\nu}\phi\Big)\del_\beta Z^{I}\phi\\
&&+\del_\alpha (D^{\alpha\beta\delta\gamma}\del_{\delta}\phi
\del_{\gamma}\phi)\del_\beta Z^{I}\phi\\
&:=&I+II+III.
\end{eqnarray*}
For $I$, by the assumption on the metric, we have
\bel{6.6}
\|I\|_{L^{2}(H_{s})}\leq C\delta\frac{1}{s^{1+\gamma}}\|(s/t)^{2}\del_\beta Z^{I}\phi\|_{L^{2}(H_{s})}
\leq C\delta\frac{1}{s^{1+\gamma}}E^{1/2}_{m}(s,\phi).
\ee
For $II$, we have to use the null condition, by Lemma \ref{Lemma3.1}, we have
\begin{eqnarray}\label{6.7}
\|II\|_{L^{2}(H_{s})}&=&\|T^{\alpha\beta\gamma\delta}
\del_{\alpha\beta}^{2}\phi\del_{\gamma}\phi\del_{\delta}Z^{I}\phi\|_{L^{2}(H_{s})}\nonumber\\
&\leq&\|C\Big((s/t)^{2}t^{-1}
|\underline{\del}_{t}\phi
\underline{\del}_{t}\phi Z^{I}\underline{\del}_{t}\phi|
+t^{-1}
|\underline{\del}_\alpha \phi
\underline{\del}_{i}\phi Z^{I}\underline{\del}_\beta \phi|\nonumber\\
&&+
|\underline{\del}_\alpha \phi
\underline{\del}_{i}\underline{\del}_{j}\phi Z^{I}\underline{\del}_\beta \phi|
+
|\underline{\del}_{j}\phi
\underline{\del}_{i}\underline{\del}_\alpha \phi Z^{I}\underline{\del}_\beta \phi|\Big)\|_{L^{2}(H_{s})}
\leq 
C\epsilon^{2}s^{-3}E^{1/2}_{m}(s,\phi).
\end{eqnarray}
For $III$, we have
\bel{6.8}
\|III\|_{H_{s}}\leq C\delta\|(s/t)^{2}s^{-1-\gamma}
\underline{\del}_\alpha \underline{\del}_\beta \phi
\underline{\del}_{\delta}\phi\underline{\del}_{\gamma}Z^{I}\phi\|_{L^{2}(H_{s})}
\leq C\delta\epsilon^{2}s^{-4-\gamma}E^{1/2}_{m}(s,\phi).
\ee
Combining \eqref{6.6}-\eqref{6.8}, we get
$$
\|\del_\alpha G^{\alpha\beta}\del_\beta Z^{I}\phi\|_{L^{2}(H_{s})}
\leq (C\delta s^{-1-\gamma}+C\epsilon^{2}s^{-3})E^{1/2}_{m}(s,\phi).
\qedhere $$
\end{proof}

Next we estimate \eqref{6.3}. From \eqref{2.11}, we have
\bel{6.9}
\tilde{F}^{I}=Z^{I}F-[Z^{I},G^{\alpha\beta}\del_{\alpha\beta}^{2}]\phi.
\ee
For the first term, we have
\bel{6.10}
\aligned
& \|Z^{I}F\|_{L^{2}(H_{s})}
\\
&\leq C\delta s^{-1-\gamma}\Big(\|\Big(\frac{s}{t} \Big)^2\underline{\del}_\alpha Z^{I}\phi\|_{L^{2}(H_{s})}
+\sum_{|I_{1}|+|I_{2}|+|I_{3}|\leq|I|}\|\Big(\frac{s}{t} \Big)^2
\underline{\del}_\alpha Z^{I_{1}}\phi
\underline{\del}_\alpha Z^{I_{2}}\phi
\underline{\del}_\alpha Z^{I_{3}}\phi\|_{L^{2}(H_{s})}\Big)
\\
&\leq C\delta s^{-1-\gamma}(1+C\epsilon^{2})E_{m}^{1/2}(s,\phi).
\endaligned
\ee
For the second term, we have
\begin{eqnarray*}
[Z^{I},G^{\alpha\beta}\del_{\alpha\beta}^{2}\phi]
&=&[Z^{I},(g^{\alpha\beta}-m^{\alpha\beta})\del^{2}_{\alpha\beta}\phi]
+[Z^{I},T^{\mu\nu\alpha\beta}\del_\mu \phi\del_{\nu}\phi
\del^{2}_{\alpha\beta}\phi]\\
&&+[Z^{I},D^{\mu\nu\alpha\beta}\del_\mu \phi\del_{\nu}\phi
\del^{2}_{\alpha\beta}\phi]:=A+B+I.
\end{eqnarray*}
We can easily obtain
\bel{6.11}
\|A\|_{L^{2}(H_{s})}\leq\sum_{|J|\leq|I|-1} C\delta s^{-1-\gamma}\|\Big(\frac{s}{t} \Big)^2\underline{\del}_\alpha \underline{\del}_\beta Z^{J}\phi\|
\leq C\delta s^{-1-\gamma}E_{m}^{1/2}(s,\phi).
\ee
Based on Lemma \ref{Lemma3.1}, we have
\bel{6.12}
\|B\|_{L^{2}(H_{s})}\leq C\epsilon^{2}s^{-3}E_{m}^{1/2}(s,\phi).
\ee
For $I$, we have
\begin{eqnarray}\label{6.13}
\|I\|_{L^{2}(H_{s})}&\leq& C\delta s^{-1-\gamma}
\sum_{|I_{1}|+|I_{2}|+|I_{3}|\leq|I|,|I_{3}<|I||}
\|(s/t)^{2}\underline{\del}_\alpha Z^{I_{1}}\phi
\underline{\del}_\beta Z^{I_{2}}\phi
\underline{\del}_{\delta}\underline{\del}_{\gamma}Z^{I_{3}}\phi\|_{L^{2}(H_{s})}\nonumber\\
&\leq&C\delta\epsilon^{2}s^{-3-\gamma}E_{m}^{1/2}(s,\phi).
\end{eqnarray}

Combining \eqref{6.10}--\eqref{6.13}, we reach the following conclusion. 

\begin{lemma}\label{Lemma6.2}
We have
\bel{6.14}
\|\tilde{F}^{I}\|_{L^{2}(H_{s})}\leq (C\delta s^{-1-\gamma}+C\epsilon^{2}s^{-3})E_{m}^{1/2}(s,\phi).
\ee
\end{lemma}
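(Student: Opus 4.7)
The plan is to start from equation \eqref{6.9}, namely $\tilde{F}^{I} = Z^{I}F - [Z^{I}, G^{\alpha\beta}\del_{\alpha\beta}^{2}]\phi$, and estimate the two pieces separately in $L^{2}(H_{s})$. I expect the final bound to come from two essentially distinct mechanisms: the metric-perturbation smallness $\delta$, which delivers the $s^{-1-\gamma}$ factor via Lemmas \ref{Lemma3.3} and \ref{Lemma3.4}, and the classical null condition for the Minkowski part of the cubic nonlinearity, which through Lemma \ref{Lemma3.1} combined with the pointwise bounds of Lemma \ref{Lemma5.1} yields the $\epsilon^{2} s^{-3}$ factor.

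For the source term $Z^{I}F$, I would apply Lemma \ref{Lemma3.4} to the linear piece $\Gamma^{\mu}\del_{\mu}\phi$ and Lemma \ref{Lemma3.3} to the cubic pieces containing $\Gamma$. Each $Z$-derivative falling on a coefficient produces a factor $\delta(s/t)^{2}s^{-1-\gamma}$, the surviving factor $\underline{\del}_{\alpha}Z^{J}\phi$ of highest order is controlled in $L^{2}(H_{s})$ by $E_{m}^{1/2}(s,\phi)$ via the definitions \eqref{1.9}--\eqref{1.10}, and the remaining low-order $\underline{\del}\phi$ factors are absorbed in $L^{\infty}$ using Lemma \ref{Lemma5.1}; this yields a contribution of order $C\delta s^{-1-\gamma}(1+C\epsilon^{2})E_{m}^{1/2}(s,\phi)$.

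For the commutator $[Z^{I}, G^{\alpha\beta}\del^{2}_{\alpha\beta}]\phi$, I would split $G^{\alpha\beta}$ into three pieces: the linear metric perturbation $(g^{\alpha\beta}-m^{\alpha\beta})$, the Minkowski cubic piece $T^{\mu\nu\alpha\beta}\del_{\mu}\phi\del_{\nu}\phi$ satisfying the double null structure, and the remaining perturbation piece $D^{\mu\nu\alpha\beta}\del_{\mu}\phi\del_{\nu}\phi$. The first and third pieces are treated by Lemmas \ref{Lemma3.4} and \ref{Lemma3.3} in exactly the same $L^{2}/L^{\infty}$ fashion as above; the key step is the second piece, where I invoke the double null estimate of Lemma \ref{Lemma3.1}, distribute the $|I|$ vector fields over the three factors so that at most one factor carries more than $\lceil|I|/2\rceil$ derivatives (leaving two factors available for the $L^{\infty}$ bounds of Lemma \ref{Lemma5.1}), and exploit the $(s/t)^{2}t^{-1}$ or $t^{-1}$ prefactors together with the $s^{-1}$ decay of $\del\phi$ and $t^{-1}$ decay of $\underline{\del}_{a}\phi$ to recover exactly $s^{-3}$.

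The principal obstacle will be the $T$-piece: one must verify that the distribution of derivatives and the hyperbolic weights genuinely combine to give $s^{-3}$ rather than a slower rate. In particular, the mixed second-order term $\underline{\del}_{a}\underline{\del}_{\alpha}Z^{J}\phi$ with the $L^{2}$ bound from Lemma \ref{Lemma4.3} must be paired with the sharp $L^{\infty}$ control $|ts\underline{\del}_{a}\underline{\del}_{\alpha}Z^{J^{+}}\phi|\lesssim\epsilon$ for the complementary low-order factors; the assumption $m\geq 6$ provides just enough room for this Sobolev/$L^{\infty}$ split. Once all four contributions are bounded separately, adding them yields the announced inequality \eqref{6.14}.
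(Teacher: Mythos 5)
Your proposal follows exactly the paper's route: the same splitting $\tilde F^{I}=Z^{I}F-[Z^{I},G^{\alpha\beta}\del^{2}_{\alpha\beta}]\phi$, the same three-way decomposition of the commutator into the $(g-m)$, $T$, and $D$ pieces handled by Lemmas \ref{Lemma3.4}, \ref{Lemma3.1} and \ref{Lemma3.3} respectively, and the same $L^{2}/L^{\infty}$ splitting via Lemma \ref{Lemma5.1}, yielding the bounds \eqref{6.10}--\eqref{6.13}. Your elaboration of how the $\epsilon^{2}s^{-3}$ rate emerges from the double null structure for the $T$-piece is simply a more detailed account of the step the paper states tersely in \eqref{6.12}, so the proposal is correct and essentially identical to the paper's proof.
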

At last, we estimate \eqref{6.4}
\bel{6.15}
\aligned
\Big|\int_{H_{s}}\frac{s}{t}\del_{t}G^{\alpha\beta}
\del_\alpha Z^{I}\phi\del_\beta Z^{I}\phi\Big|
& =
\Big|\int_{H_{s}}\frac{s}{t}\del_{t}(g^{\alpha\beta}-m^{\alpha\beta})
\del_\alpha Z^{I}\phi\del_\beta Z^{I}\phi dx
\\
&+\int_{H_{s}}\del_{t}(T^{\alpha\beta\mu\nu}\del_\mu \phi\del_{\nu}\phi)
\del_\alpha Z^{I}\phi\del_\beta Z^{I}\phi dx
\\
&+
\int_{H_{s}}\del_{t}(D^{\alpha\beta\mu\nu}\del_\mu \phi\del_{\nu}\phi)
\del_\alpha Z^{I}\phi\del_\beta Z^{I}\phi dx\Big|
:=|L+M+N|.
\endaligned
\ee 

It is easy to get
\bel{6.16}
|L|\leq |C\delta s^{-1-\gamma}\int_{H_{s}}(s/t)^{3}\underline{\del}_\alpha Z^{I}\phi
\underline{\del}_\beta Z^{I}\phi dx|
\leq C\delta s^{-1-\gamma} E_{m}(s,\phi).
\ee
From \eqref{5.8}, we have
\bel{6.17}
|M|\leq C\epsilon^{2}s^{-2}E_{m}(s,\phi).
\ee
Then for $N$, we have
\begin{eqnarray}\label{6.18}
|N|&\leq& C\delta s^{-1-\gamma}|\int_{H_{s}}\sum_{|J|\leq1}(\frac{s}{t})^{3}\underline{\del}_\alpha \phi
\underline{\del}_\beta Z^{J}\phi
\underline{\del}_{\delta}Z^{I}\phi
\underline{\del}_{\gamma}Z^{I}\phi|
\leq
C\delta\epsilon^{2}s^{-3-\gamma}.
\end{eqnarray}

Combining \eqref{6.16}-\eqref{6.18}, we finally obtain the following. 

\begin{lemma}\label{Lemma6.3}
We have
\be
\Big|\int_{H_{s}}\frac{s}{t}\del_{t}G^{\alpha\beta}
\del_\alpha Z^{I}\phi\del_\beta Z^{I}\phi\Big|
\leq (C\delta s^{-1-\gamma}+C\epsilon^{2}s^{-3})E(s,Z^{I}\phi).
\ee
\end{lemma}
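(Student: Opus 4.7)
The plan is to split $G^{\alpha\beta}$ into three pieces according to its definition, namely the metric perturbation $g^{\alpha\beta}-m^{\alpha\beta}$, the purely Minkowskian cubic \emph{null} part $T^{\alpha\beta\mu\nu}\del_\mu\phi\del_\nu\phi$ coming from the combination $m^{\mu\nu}m^{\alpha\beta}-m^{\alpha\mu}m^{\beta\nu}$, and the perturbative cubic part $D^{\alpha\beta\mu\nu}\del_\mu\phi\del_\nu\phi$ collecting the $(g-m)$--corrections to that quadratic expression. This yields the three integrals $L$, $M$, $N$ displayed in \eqref{6.15}, and I will bound each separately and add.

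For $L$, I rely on the decay hypothesis $|\del^{I}g_{\mu\nu}|\leq\delta(s/t)^{2}s^{-|I|-\gamma}$. Rewriting $\del_{\alpha}Z^{I}\phi=\Psi^{\beta'}_{\alpha}\underline{\del}_{\beta'}Z^{I}\phi$ in the semi-hyperboloidal frame, I extract one factor $(s/t)^{2}$ from $\del_{t}(g^{\alpha\beta}-m^{\alpha\beta})$ and one further $s/t$ from the explicit prefactor in the integrand. Since $(s/t)^{2}|\del_{\beta}Z^{I}\phi|^{2}$ is pointwise controlled by the energy density defining $E_{0}(s,Z^{I}\phi)$, integration over $H_{s}$ gives $|L|\leq C\delta s^{-1-\gamma}E_{m}(s,\phi)$, which is \eqref{6.16}.

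For $M$, the idea is to recycle the null-form computation carried out in the proof of Lemma~\ref{Lemma5.3} with $v:=Z^{I}\phi$, but now with $\phi$ replaced in one slot by $\del_{t}\phi$. Every term in that semi-hyperboloidal expansion either contains a good derivative $\underline{\del}_{a}$ acting on a low-order $\phi$-factor (which decays like $\epsilon/t$ by Lemma~\ref{Lemma5.1}) or carries a favorable coefficient $(s/t)^{2}$ in front of two bad derivatives of $v$. Pairing the low-order $L^{\infty}$ bounds $|\del_{\alpha}\phi|\lesssim \epsilon/s$ and $|\underline{\del}_{a}\phi|\lesssim \epsilon/t$ with the $L^{2}$ control of two copies of $\del_{\alpha}Z^{I}\phi$, and using $s\leq t$, the worst bookkeeping produces the gain $|M|\leq C\epsilon^{2}s^{-2}E_{m}(s,\phi)$, reproducing \eqref{6.17}. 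For $N$, the Leibniz expansion of $\del_{t}(D^{\alpha\beta\mu\nu}\del_{\mu}\phi\del_{\nu}\phi)$ is a sum of cubic and quadratic-in-$\del\phi$ terms weighted by Lemma~\ref{Lemma3.3}'s coefficient $\delta(s/t)^{2}s^{-1-\gamma}$. Applying the a priori bound of Lemma~\ref{Lemma5.1} to the low-order $\del_{\alpha}\phi$, $\underline{\del}_{a}\phi$, and $\del_{t}^{2}\phi$ factors, and $L^{2}$-energy control to the two $\del_{\alpha}Z^{I}\phi$ factors, yields $|N|\leq C\delta\epsilon^{2}s^{-3-\gamma}$, absorbed into the claimed estimate. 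Summing $L$, $M$, $N$ gives the lemma.

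The main obstacle is the bookkeeping for $M$: one must verify that the algebraic cancellations recorded in Section~\ref{Section3} for the \emph{double null} form survive when $\del_{t}$ falls on one of the $\del\phi$ factors inside $T^{\alpha\beta\mu\nu}\del_{\mu}\phi\del_{\nu}\phi$, so that every surviving term still carries either a good derivative on some $\phi$-factor or the compensating weight $(s/t)^{2}$. This is exactly the identity underlying \eqref{5.8}, and reusing that semi-hyperboloidal expansion verbatim is what produces the $\epsilon^{2}s^{-2}$ decay rather than a merely logarithmic gain.
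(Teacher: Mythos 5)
Your proof follows essentially the same route as the paper: the same splitting of $\del_{t}G^{\alpha\beta}$ into the metric-perturbation term $L$, the Minkowskian null term $M$, and the perturbative cubic term $N$ as in \eqref{6.15}, with the same bounds \eqref{6.16}--\eqref{6.18} obtained from the metric decay assumption, the semi-hyperboloidal null-form expansion of Lemma~\ref{Lemma5.3} (i.e.\ \eqref{5.8}), and Lemma~\ref{Lemma3.3} combined with the a priori $L^{\infty}$ bounds of Lemma~\ref{Lemma5.1}. The only caveat, which you share with the paper's own \eqref{6.17}, is that the bound $|M|\lesssim\epsilon^{2}s^{-2}E_{m}(s,\phi)$ is weaker than the $\epsilon^{2}s^{-3}$ rate displayed in the lemma's statement; since $s^{-2}$ is still integrable in $s$, this does not affect the closing of the energy estimate in Proposition~\ref{pro6.4}.
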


Based on above Lemmas \ref{Lemma6.1}-\ref{Lemma6.3}, we have the following proposition.

\begin{proposition}\label{pro6.4}
When $m\geq 6$, we have
\bel{6.20}
\frac{d}{ds}\Et_{m}(s,\phi)\leq (C\delta s^{-1-\gamma}+C\epsilon^{2}s^{-3})E_{m}(s,\phi).
\ee
\end{proposition}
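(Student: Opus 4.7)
The plan is to feed the three technical estimates of Lemmas \ref{Lemma6.1}, \ref{Lemma6.2} and \ref{Lemma6.3} directly into the energy identity \eqref{2.11} (rewritten as the inequality \eqref{6.1}), and then to sum over all multi-indices $|I|\leq m$. Since $\tilde E_m(s,\phi)=\sum_{|I|\leq m}\tilde E_0(s,Z^I\phi)$, each individual $Z^I$--energy obeys
$$
\frac{d}{ds}\tilde E_0(s,Z^I\phi)\leq
\bigl(\|\del_\alpha G^{\alpha\beta}\del_\beta Z^I\phi\|_{L^2(H_s)}+\|\tilde F^I\|_{L^2(H_s)}\bigr)\,E_0^{1/2}(s,Z^I\phi)
+\Bigl|\int_{H_s}\tfrac{s}{t}\del_t G^{\alpha\beta}\del_\alpha Z^I\phi\,\del_\beta Z^I\phi\,dx\Bigr|,
$$
which after summing and using $E_0^{1/2}(s,Z^I\phi)\leq E_m^{1/2}(s,\phi)$ in the first factor gives the analogous inequality with $E_m$ on the right hand side.

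Next, I insert the three bounds: Lemma \ref{Lemma6.1} handles the first factor, Lemma \ref{Lemma6.2} handles the commutator/source term $\tilde F^I$, and Lemma \ref{Lemma6.3} handles the boundary--type integral involving $\del_t G^{\alpha\beta}$. Each of them is controlled by the same master weight $\bigl(C\delta s^{-1-\gamma}+C\epsilon^2 s^{-3}\bigr)$, times either $E_m^{1/2}(s,\phi)$ (for the first two) or $E_m(s,\phi)$ (for the last). Combining these with the identity above, the first two terms contribute $\bigl(C\delta s^{-1-\gamma}+C\epsilon^2 s^{-3}\bigr) E_m^{1/2}(s,\phi)\cdot E_m^{1/2}(s,\phi) = \bigl(C\delta s^{-1-\gamma}+C\epsilon^2 s^{-3}\bigr) E_m(s,\phi)$, exactly matching the third contribution. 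Adding up yields \eqref{6.20}.

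The only nontrivial point is to verify that the hypothesis $m\geq 6$ is actually used when the three lemmas are invoked. In the proofs of Lemmas \ref{Lemma6.1}--\ref{Lemma6.3}, products of the form $Z^{I_1}\del\phi\cdot Z^{I_2}\del\phi\cdot Z^{I_3}\del\phi$ with $|I_1|+|I_2|+|I_3|\leq |I|\leq m$ appear, and one must place the factor with smaller index in $L^\infty$ through the Sobolev--type bounds of Lemma \ref{Lemma4.1}, Lemma \ref{Lemma4.3}, Lemma \ref{Lemma5.1} and the pointwise bound of Lemma \ref{Lemma5.5}. Those $L^\infty$ bounds all require the number of remaining derivatives to be at most $|I|-2$ (or $|I|-4$ for the second--order $\del_t^2$--bound of Lemma \ref{Lemma5.5}). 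Since at least two factors in each trilinear expression can carry index no larger than $\lfloor m/2\rfloor$, the threshold $m\geq 6$ guarantees the required index splits.

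The main (conceptual) obstacle is ensuring that the two cubic contributions $T^{\alpha\beta\mu\nu}\del_\mu\phi\del_\nu\phi\,\del^2_{\alpha\beta}\phi$ and $D^{\alpha\beta\mu\nu}\del_\mu\phi\del_\nu\phi\,\del^2_{\alpha\beta}\phi$ do not produce a divergent $s^{-1}$ factor at top order; this is exactly where the ``double null'' structure encoded in Lemma \ref{Lemma3.1} (producing the favorable $(s/t)^2 t^{-1}$ weight) combines with the metric--perturbation weight $\delta(s/t)^2 s^{-1-\gamma}$ of Lemma \ref{Lemma3.3} to yield the integrable factor $s^{-1-\gamma}+\epsilon^2 s^{-3}$. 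Once these are in hand, the rest of the argument is a routine assembly, and \eqref{6.20} follows immediately.
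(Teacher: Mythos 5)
Your proposal is correct and follows essentially the same route as the paper: the paper likewise obtains Proposition \ref{pro6.4} by inserting the bounds of Lemmas \ref{Lemma6.1}--\ref{Lemma6.3} into the energy inequality \eqref{6.1} derived from \eqref{2.11} and summing over $|I|\leq m$. Your added remarks on where $m\geq 6$ enters (through the $L^\infty$ bounds used inside those lemmas) are consistent with the paper's usage, so there is nothing to correct.
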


\begin{proof}[Proof of Theorem \ref{Theorem1.4}]
The local existence and uniqueness of the solution can be derived by the standard method.
For the global existence, what we have to do is to ``close'' the a priori bootstrap estimate \eqref{5.1}.
By the standard bootstrap method and \eqref{6.20}, if we assume that $E_{m}(s,\phi)\leq C\epsilon^{2}$ for some large constant $C$, we deduce 
\be
E_m(s,\phi)\leq E_m(B+1,\phi)e^{\int_{B+1}^\infty C\delta s^{-1-\gamma}+C\epsilon^{2}s^{-3}ds}=C_{0}e^{C\delta+C\epsilon^{2}}\epsilon^{2}
\leq \frac{1}{2}C\epsilon^{2},
\ee
provided $\delta$ and $\epsilon$ are sufficiently small.
Thus, the solution exists globally in time and, furthermore its energy at all relevant order is uniformly bounded.
\end{proof}


\end{document}